\theoremstyle{plain}
\newtheorem{theorem}{Theorem}[section]
\newtheorem{lemma}[theorem]{Lemma}
\newtheorem{corollary}[theorem]{Corollary}
\theoremstyle{definition}
\newtheorem{question}[theorem]{Question}
\newcommand{\cl}{\mathsf{cl}}
\newcommand{\CH}{\mathsf{CH}}
\newcommand{\GCH}{\mathsf{GCH}}
\newcommand{\BB}{\mathcal{B}}
\newcommand{\NN}{\mathcal{N}}
\newcommand{\UU}{\mathcal{U}}
\newcommand{\VV}{\mathcal{V}}
\newcommand{\WW}{\mathcal{W}}
\newcommand{\PP}{\mathcal{P}}
\newcommand{\cccc}{\mathfrak{c}}
\begin{document}

\title{Productively Lindel\"of spaces of countable tightness}

\author{Andrea Medini}
\address{Kurt G\"odel Research Center for Mathematical Logic
\newline\indent University of Vienna
\newline\indent W\"ahringer Stra{\ss}e 25
\newline\indent A-1090 Wien, Austria}
\email{andrea.medini@univie.ac.at}

\author{Lyubomyr Zdomskyy}
\address{Kurt G\"odel Research Center for Mathematical Logic
\newline\indent University of Vienna
\newline\indent W\"ahringer Stra{\ss}e 25
\newline\indent A-1090 Wien, Austria}
\email{lzdomsky@gmail.com}

\keywords{Productively Lindel\"of, powerfully Lindel\"of, elementary submodels, countable tightness, network, separation axioms, point-continuum, Mi\v{s}\v{c}enko's lemma.}

\thanks{The authors acknowledge the support of the FWF grant I 1209-N25. The second-listed author also thanks the Austrian Academy of Sciences for its generous support through the APART Program.}

\date{November 21, 2017}

\begin{abstract}
Michael asked whether every productively Lindel\"of space is powerfully
Lindel\"of. Building of work of Alster and De la Vega, assuming the Continuum Hypothesis, we show that every productively
Lindel\"of space of countable tightness is powerfully Lindel\"of. This
strengthens a result of Tall and Tsaban. The same methods also yield
new proofs of results of Arhangel'skii and Buzyakova. Furthermore, assuming the Continuum Hypothesis, we show that a productively
Lindel\"of space $X$ is powerfully Lindel\"of if every open cover of $X^\omega$ admits a point-continuum refinement consisting of basic open sets.
This strengthens a result of Burton and Tall.
Finally, we show that separation axioms are not relevant to Michael's question: if
there exists a counterexample (possibly not even $\mathsf{T}_0$), then there exists a regular
(actually, zero-dimensional) counterexample.
\end{abstract}

\maketitle

\section{Introduction}

The research in this article is
ultimately motivated by the following well-known question, which is credited to Michael by Alster (see \cite{alster}). Recall that a space $X$ is \emph{productively Lindel\"of} if $X\times Y$ is
Lindel\"of for every Lindel\"of space $Y$, and it is \emph{powerfully Lindel\"of} if $X^\omega$ is Lindel\"of. For all other notation and terminology, see Section 2.
\begin{question}[Michael]\label{qmichael}
Does productively Lindel\"of imply powerfully Lindel\"of?
\end{question}
Notice that if $X$ is productively Lindel\"of then $X^n$ is Lindel\"of for every $n\in\omega$. While, assuming $\CH$, there exists a non-powerfully Lindel\"of space $X$ such that
$X^n$ is Lindel\"of for every $n\in\omega$ (see \cite[Example 1.2]{michael}), Question \ref{qmichael} remains open under any set-theoretic assumption. The following seems to be the most substantial result on the subject (see \cite[Theorem 2]{alster}).
\begin{theorem}[Alster]\label{atheorem}
Assume $\CH$. If $X$ is a productively Lindel\"of space and
$w(X)\leq\cccc$ then $X$ is
powerfully Lindel\"of.
\end{theorem}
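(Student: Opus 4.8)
The plan is to factor the theorem through the \emph{Alster property}: a space has it if every cover by $G_\delta$ sets such that each compact subset lies in a single member admits a countable subcover. (As is standard for this result, all spaces are assumed Tychonoff.) I will use two facts. The first, provable in ZFC, is a theorem of Alster: every space with the Alster property is powerfully Lindel\"of. The second, where the Continuum Hypothesis is used essentially, is that under $\CH$ every productively Lindel\"of space of weight at most $\cccc$ has the Alster property. Granting both, a productively Lindel\"of $X$ with $w(X)\leq\cccc$ has the Alster property by the second fact, hence $X^\omega$ is Lindel\"of by the first, which is the conclusion.

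For the first fact I would argue in two steps. Step A: a Tychonoff space with the Alster property is Lindel\"of. Indeed, given an open cover closed under finite unions, for each compact $K$ choose a member $U$ with $K\subseteq U$ and, using complete regularity and compactness of $K$, a cozero (hence $G_\delta$) set $C$ with $K\subseteq C\subseteq U$; the collection of all such $C$ is a $G_\delta$-cover refining the original, so a countable subcover of it produces a countable subcover of the original. Step B: the Alster property is countably productive (Alster's lemma); one removes coordinates of a countable product one at a time, at each stage replacing the countably many open sets contributed by a spent coordinate by their intersections and applying the Alster property of the next coordinate, the delicate point being to maintain the ``compact sets in one member'' condition. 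Since $X^\omega$ is Tychonoff, Step B gives it the Alster property and Step A then gives that it is Lindel\"of.

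For the second fact I would prove the contrapositive. Suppose $w(X)\leq\cccc=\omega_1$ and $X$ fails the Alster property, witnessed by a $G_\delta$-cover $\GG$ with no countable subcover; fix a base of size at most $\omega_1$. After a preliminary reduction exploiting $w(X)\leq\cccc$ one may assume the relevant combinatorial data has size $\omega_1$. The goal is a Lindel\"of space $Y$ with $X\times Y$ not Lindel\"of, contradicting productive Lindel\"ofness. I would construct $Y$ together with an open cover $\WW$ of $X\times Y$ by a recursion of length $\omega_1$, conveniently organized along an increasing chain $\langle M_\alpha:\alpha<\omega_1\rangle$ of countable elementary submodels of a large $H(\theta)$ containing $X$, $\GG$ and the base, with $\bigcup_\alpha M_\alpha$ of size $\omega_1$, in the style of de la Vega. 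At stage $\alpha$ one introduces a new point of $Y$ with a carefully chosen neighbourhood filter and adds a corresponding open box to $\WW$, so that (i) any countable subfamily of $\WW$ lying in some $M_\alpha$ fails to cover, since the failure of $\GG$ to have a countable subcover supplies a missed point of $X\times Y$, whence $\WW$ has no countable subcover; and (ii) the neighbourhood filters are tame enough that every open cover of $Y$ reflects to a countable subcover---a closing-off argument along the chain, and exactly the place where $\CH$ is needed to make the $\omega_1$-recursion cohere.

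The main obstacle is precisely the construction in the previous paragraph: one must topologize $Y$ finely enough that $X\times Y$ still detects that $\GG$ has no countable subcover, yet coarsely enough that $Y$ itself is Lindel\"of, and these two requirements pull against each other. Getting the balance right, keeping the $\omega_1$-bookkeeping consistent, and handling the elementary submodels correctly---where one may also invoke Mi\v{s}\v{c}enko-type control over minimal finite subcovers of point-finite open families---is the technical crux; the first fact and the reductions above are comparatively soft.
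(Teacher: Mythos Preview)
The paper does not supply a proof of this theorem at all: it is quoted as a known result of Alster, with a citation to \cite{alster}, and is used throughout as a black box (for instance in the proof of Lemma~\ref{nosepccover}). So there is no ``paper's own proof'' against which to compare your attempt.

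That said, your outline is essentially the structure of Alster's original argument: one introduces the Alster property, shows in ZFC that it is countably productive and implies Lindel\"of, and then shows under $\CH$ that a productively Lindel\"of space of weight at most $\cccc$ must have the Alster property by constructing, from a failure, a Lindel\"of $Y$ with $X\times Y$ not Lindel\"of. Two cautions. First, your description of Step~B is too vague to be a proof sketch; the countable productivity of the Alster property is the nontrivial half of the first fact, and the bookkeeping there (iterating through coordinates while preserving the ``each compact set is inside one member'' condition) needs more than ``remove coordinates one at a time''. Second, your proposed construction of $Y$ via a chain of countable elementary submodels ``in the style of de la Vega'' is anachronistic and not how Alster actually proceeds; his construction is a direct transfinite recursion of length $\omega_1$ that builds $Y$ as a one-point Lindel\"ofication-type space, and the elementary-submodel machinery you describe, together with the mention of Mi\v{s}\v{c}enko-type control, does not obviously slot in here. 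The high-level plan is right, but the details of the $Y$-construction as you describe them would need substantial reworking before they constitute a proof.
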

The technique of elementary submodels has been successfully employed by several
authors to establish further consequences of the above theorem. For example,
Burton and Tall obtained Theorem \ref{bttheorem} below, while Tall
and Tsaban obtained the following result (see \cite[Theorem 1.4]{talltsaban}).
\begin{theorem}[Tall, Tsaban]\label{tttheorem}
Assume $\CH$. If $X$ is a productively Lindel\"of sequential space then $X$ is
powerfully Lindel\"of.
\end{theorem}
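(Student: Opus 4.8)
The plan is to deduce Theorem~\ref{tttheorem} from Alster's Theorem~\ref{atheorem} by an elementary-submodel argument: inside a countably closed model $M$ one isolates a closed subspace $Z$ of $X$ whose topology, thanks to sequentiality, is entirely captured by $M$; applying Theorem~\ref{atheorem} to $Z$ makes $Z^{\omega}$ Lindel\"of, and then a hypothetical bad open cover of $X^{\omega}$ is reflected down to a bad cover of $Z^{\omega}$, a contradiction. So suppose toward a contradiction that an open cover $\UU$ of $X^{\omega}$, which we may take to consist of basic open boxes, has no countable subcover. Since $\CH$ gives $\cccc=\aleph_1$ and hence $\cccc^{\aleph_0}=\aleph_1$, fix a regular $\theta$ with $X,\tau_X,\UU\in H(\theta)$ and an elementary submodel $M\prec H(\theta)$ with $|M|=\cccc$, $\omega_1\subseteq M$, ${}^{\omega}M\subseteq M$, and $X,\tau_X,\UU\in M$. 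Because $\omega_1\subseteq M$ and $\cccc=\aleph_1$, every element of $M$ of cardinality at most $\cccc$ is a \emph{subset} of $M$; in particular $\mathcal{P}(\omega)\subseteq M$, and moreover ${}^{\omega}M\subseteq M$ yields $X^{\omega}\cap M=(X\cap M)^{\omega}$.

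Put $Z=X\cap M$. A De~la~Vega--style reflection shows $Z$ is closed in $X$: if $x\in\overline{Z}$ then, by countable tightness (which follows from sequentiality), there is a countable $A\subseteq Z$ with $x\in\overline{A}$; countable closedness gives $A\in M$, hence $\overline{A}\in M$, and since $\overline{A}$ arises from the countable set $A$ by iterating ``adjoin all limits of convergent sequences'' $\omega_1$ times, the identity $\aleph_1^{\aleph_0}=\aleph_1$ forces $|\overline{A}|\leq\cccc$, so $\overline{A}\subseteq M$ and $x\in Z$. Thus $Z$ is a closed, hence productively Lindel\"of, and sequential subspace of $X$. Next one checks that the subspace topology $\tau_X\restriction Z$ is the sequential coreflection of the topology $\sigma$ generated on $Z$ by $\{U\cap Z:U\in\tau_X\cap M\}$, which has weight at most $\cccc$: the two topologies have the same convergent sequences (the finer one refines the coarser trivially; conversely, if $z_n\to z$ in $\sigma$ but not in $\tau_X$ there are an infinite $I\subseteq\omega$ and a $\tau_X$-open $U$ with $z\in U$ and $z_n\notin U$ for $n\in I$, and since $(z_n)_n$, $z$ and $I$ all lie in $M$ such a $U$ can be found inside $\tau_X\cap M$ by elementarity, a contradiction), and sequentiality of $Z$ promotes ``same convergent sequences'' to ``same closed sets''. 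From this one extracts that $Z$ falls under the hypothesis of Theorem~\ref{atheorem}, so $Z^{\omega}$ is Lindel\"of.

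Now $(X\cap M)^{\omega}=X^{\omega}\cap M$ is a closed subspace of $X^{\omega}$, and by elementarity $\{U\cap(X\cap M)^{\omega}:U\in\UU\cap M\}$ covers it with no countable subcover: any $f\in(X\cap M)^{\omega}$ lies in $M$, so $M$ sees some member of $\UU$ containing $f$; and a countable subcover would, by ${}^{\omega}M\subseteq M$, itself belong to $M$ and witness inside $M$ a countable subcover of $X^{\omega}$, contrary to elementarity. This contradicts the Lindel\"ofness of $Z^{\omega}=(X\cap M)^{\omega}$ established above, completing the proof.

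The step I expect to be the main obstacle is extracting from sequentiality a usable smallness of $Z$: one must know that the \emph{finer}, sequential topology $\tau_X\restriction Z$ — not merely its coarsening $\sigma$ — is small enough for Theorem~\ref{atheorem} to apply, since Lindel\"ofness does not pass from a coarser topology to a finer one. This is precisely where countable tightness by itself would not suffice and the full strength of sequentiality must enter, and it is also where one has to be careful about exactly which weight- or network-type smallness Alster's argument really requires and how to certify it for $Z$ from the facts that $Z$ is sequential, productively Lindel\"of, and (being contained in $M$) of size at most $\cccc$ with $\sigma$ of weight at most $\cccc$.
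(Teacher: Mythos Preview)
Your overall architecture---pass to a countably closed $M$ of size $\cccc$, isolate a closed productively Lindel\"of subspace $Z$, apply an Alster-type result to make $Z^\omega$ Lindel\"of, then reflect a countable subcover back to $X^\omega$---is exactly the skeleton of the paper's argument, and your reflection steps are correct. The gap you yourself flag, however, is real as stated: knowing that $\tau_X\!\restriction\! Z$ is the sequential coreflection of a topology $\sigma$ with $w(\sigma)\leq\cccc$ does \emph{not} yield $w(Z,\tau_X\!\restriction\! Z)\leq\cccc$, so Theorem~\ref{atheorem} does not apply on the nose, and your coreflection discussion does not close the gap.

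The fix is much simpler than you suggest, and it shows that your detour through $\sigma$ and its coreflection is unnecessary. Once your sequential-closure cardinality argument gives that $Z=X\cap M$ is closed, you already have $|Z|\leq |M|=\cccc$, hence trivially $nw(Z)\leq\cccc$ (singletons form a network). Now invoke Corollary~\ref{network} (which rests on the Burton--Tall Theorem~\ref{bttheorem}, not directly on Theorem~\ref{atheorem}) to conclude that $Z$ is powerfully Lindel\"of. Everything else in your proof then goes through.

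More to the point, your assertion that ``countable tightness by itself would not suffice and the full strength of sequentiality must enter'' is precisely what the paper disproves: Theorem~\ref{mainct} obtains the conclusion assuming only countable tightness. The paper takes $Z=\cl(X\cap M)$ (so closedness is automatic and no bound on $|\overline{A}|$ is needed), shows $Z^\omega\subseteq\bigcup(\UU\cap M)$ using countable tightness and regularity, and then---this is the key idea you are missing---uses the De~la~Vega-style Lemma~\ref{hausdorff} and Theorem~\ref{lctimpliesnetwork} to prove that $\{A\cap Z:A\in M\}$ is a network for $Z$, whence $nw(Z)\leq\cccc$ and Corollary~\ref{network} applies. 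The only place your route genuinely uses sequentiality (the $\aleph_1^{\aleph_0}=\aleph_1$ bound on iterated sequential closures) is rendered superfluous by this network lemma.
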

Continuing in this tradition, we will show that Theorem \ref{tttheorem} can be improved by weakening ``sequential'' to
``of countable tightness'' (see Theorem \ref{mainct}). Furthermore, we will show that separation axioms are irrelevant to Question \ref{qmichael}, Theorem \ref{atheorem} and Theorem \ref{bttheorem} (see Corollary \ref{nosepmichael}, Theorem \ref{nosepalster} and Corollary \ref{nosepbt} respectively).
Finally, we will obtain a strengthening of Theorem \ref{bttheorem} (see Theorem \ref{mainpc}).

\section{Notation and terminology}

We will generally follow \cite{engelking}. In particular, every Lindel\"of space
is regular by definition. A non-empty space is \emph{zero-dimensional} if it is
$\mathsf{T}_1$ and it has a base consisting of clopen sets. It is easy to see that every
zero-dimensional space is regular (actually, Tychonoff). A space $X$ is \emph{quasi-Lindel\"of} if every open cover of $X$ has
a countable subcover. A space $X$ is \emph{productively quasi-Lindel\"of} if $X\times Y$ is quasi-Lindel\"of for every Lindel\"of space $Y$, and it is \emph{powerfully quasi-Lindel\"of} if $X^\omega$ is quasi-Lindel\"of. Given a space $X$ and $U\subseteq X^\omega$, we will say that $U$ is a \emph{basic open set}
if $U=\prod_{i\in\omega}V_i$, where each $V_i$ is an open subset of $X$
and $V_i=X$ for all but finitely many $i$.

The \emph{tightness} $t(X)$ of a space $X$ is the minimum cardinal $\kappa$ such that
whenever $x\in\cl(A)$ for some $A\subseteq X$ then there exists $B\in[A]^{\leq\kappa}$ such that $x\in\cl(B)$.
Given a subset $A$ of a space $X$, define $A_\alpha$ for $\alpha<\omega_1$ by recursion as follows.
\begin{itemize}
\item $A_0=A$.
\item $A_{\alpha+1}=\{x\in X: x\textrm{ is a limit of some sequence of elements
of }A_\alpha\}$.
\item $A_\gamma=\bigcup_{\alpha <\gamma}A_\alpha$, if $\gamma$ is a limit
ordinal.
\end{itemize}
A space $X$ is \emph{sequential} if $\cl(A)=\bigcup_{\alpha
<\omega_1}A_\alpha$ for every $A\subseteq X$. It is easy to see that every
sequential space has countable tightness.

The \emph{Lindel\"of number} $\ell(X)$ of a space $X$ is the least
cardinal $\kappa$ such that every open cover of $X$ has a subcover of size at
most $\kappa$. A family $\NN$ of subsets of a space $X$ is a
\emph{network} for $X$ if for every $x\in X$ and every neighborhood $U$ of $x$
there exists $N\in\NN$ such that $x\in N\subseteq U$. The \emph{network-weight}
$nw(X)$ of a space $X$ is the least cardinal $\kappa$ such that $X$ has a
network of size $\kappa$. The \emph{weight} $w(X)$ of a space $X$ is the least cardinal
$\kappa$ such that $X$ has a base of size $\kappa$. Given a cardinal $\kappa$ and a set $X$, a family $\WW$ of subsets of $X$
is \emph{point-$\kappa$} if $|\{W\in\WW:x\in W\}|\leq\kappa$ for every $x\in X$.

We will assume familiarity with the technique of elementary submodels (see for
example \cite{dow}). As usual, by ``elementary submodel'' we will really mean
``elementary submodel of $H(\theta)$ for a sufficiently large cardinal $\theta$''.
Given an infinite cardinal $\kappa$, an elementary submodel $M$ is \emph{$\kappa$-closed} if $[M]^{\leq\kappa}\subseteq M$. Given a set $S$ such that $|S|\leq 2^\kappa$, it is easy to
construct a $\kappa$-closed elementary submodel $M$ such that $S\subseteq M$
and $|M|=2^\kappa$.

\section{Adapting a method of De la Vega}

In this section, we adapt to our needs a method that De la Vega developed
in \cite{delavega} (see also \cite[Chapter 4]{delavegathesis}). In fact, Lemma
\ref{hausdorff} is \cite[Lemma 2.2]{delavega}, and the proof of Theorem \ref{lctimpliesnetwork} is inspired by the proof of \cite[Lemma 2.3]{delavega}.

\begin{lemma}[De la Vega]\label{hausdorff} Let $\kappa$ be an infinite cardinal. Assume that $(X,\tau)$ is a regular space such that $t(X)\leq\kappa$. Let $M$ be a $\kappa$-closed elementary submodel such that
$(X,\tau)\in M$, and let $Z=\cl(X\cap M)$. Then, whenever $z_0,z_1\in Z$ are distinct points, there exist
$U_0,U_1\in \tau\cap M$ such that $z_0\in U_0$, $z_1\in U_1$, and $U_0\cap
U_1=\varnothing$. 
\end{lemma}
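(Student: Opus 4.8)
The plan is to fix distinct $z_0,z_1\in Z$ and build the separating sets $U_0,U_1\in\tau\cap M$, using that $t(X)\leq\kappa$ to ``pull $z_0$ and $z_1$ into $M$'' and then transferring a separation through the elementary submodel.

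First I would invoke tightness: since $z_i\in Z=\cl(X\cap M)$ and $t(X)\leq\kappa$, pick $A_i\in[X\cap M]^{\leq\kappa}$ with $z_i\in\cl(A_i)$; as $A_i$ is a $\leq\kappa$-sized subset of $M$ and $M$ is $\kappa$-closed, $A_i\in M$, whence also $\cl(A_i)\in M$ (it is definable from $A_i$, $X$, $\tau$, all of which lie in $M$). Next I would use regularity to separate these closures. Since $X$ is regular and $T_1$, hence $T_3$, there are open $V_0\ni z_0$, $V_1\ni z_1$ with $\cl(V_0)\cap\cl(V_1)=\varnothing$; as $V_i$ is a neighbourhood of $z_i\in\cl(A_i)$ we get $z_i\in\cl(A_i\cap V_i)$, so after replacing $A_i$ by $A_i\cap V_i$ --- still a $\leq\kappa$-subset of $M$, hence still in $M$ --- we may assume $C_0\cap C_1=\varnothing$, where $C_i:=\cl(A_i)\in M$; note $z_i\in C_i$.

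For the main step I would reduce to finding a single open set: it suffices to produce $U_0\in\tau\cap M$ with $z_0\in U_0$ and $z_1\notin\cl(U_0)$, since then $U_0$ and $X\setminus\cl(U_0)$ both lie in $M$ and separate $z_0$ from $z_1$. To find such a $U_0$ I would work inside $M$ with the data $A_0$, $C_1$: by regularity, for each point of $A_0$ there is an open neighbourhood whose closure misses the closed set $C_1$, and by elementarity these can be chosen uniformly by a function belonging to $M$; assembling their union (legitimate inside $M$, as there are only $\leq\kappa$ of them and $M$ is $\kappa$-closed) yields a candidate $U_0\in\tau\cap M$ with $A_0\subseteq U_0$, and hence $z_0\in\cl(A_0)\subseteq\cl(U_0)$. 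One wants $\cl(U_0)\cap C_1=\varnothing$ (which gives $z_1\notin\cl(U_0)$, as $z_1\in C_1$) and $z_0\in U_0$ rather than merely $z_0\in\cl(U_0)$; achieving both is the delicate part, and I would try to arrange it either by reflecting a single carefully chosen separation statement (playing $C_1$ off against the smallness of $A_0$) or by iterating Steps 1--3, at each stage replacing $A_0$ by a smaller $\leq\kappa$-subset of $X\cap M$ that traps $z_0$ more tightly and then passing to a limit.

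The step I expect to be the genuine obstacle is exactly this last point. A $\leq\kappa$-indexed union of small neighbourhoods need not have its closure contained in $X\setminus C_1$, and $z_0$, not being a member of $M$, cannot be handed a small neighbourhood by elementarity directly --- regularity of $X$ must be transported across $M$ using only that $z_0$ is approximated by $A_0\in M$. Overcoming this by the right interplay of regularity and $\leq\kappa$-tightness (and isolating the correct analogue of the relevant Lindel\"of-type separation fact when $\kappa>\omega$) is, I expect, the ingenious core of De la Vega's proof.
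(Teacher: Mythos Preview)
Your setup is essentially the paper's: you produce $A_0,A_1\in M$ with $z_i\in\cl(A_i)$ and $\cl(A_0)\cap\cl(A_1)=\varnothing$, exactly as in the paper's proof. The gap is that you then miss the one-line finish and instead embark on an unnecessary construction. Observe that the very sets you used to obtain your $V_0,V_1$ (first Hausdorff-separate $z_0,z_1$ by disjoint open $W_0,W_1$, then shrink by regularity to $V_i$ with $\cl(V_i)\subseteq W_i$) already witness that the statement
\[
\exists\,U_0,U_1\in\tau\ \bigl(\,\cl(A_0)\subseteq U_0\ \wedge\ \cl(A_1)\subseteq U_1\ \wedge\ U_0\cap U_1=\varnothing\,\bigr)
\]
is \emph{true}, since $\cl(A_i)\subseteq\cl(V_i)\subseteq W_i$. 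This statement has parameters $A_0,A_1,\tau$, all in $M$; by elementarity, witnesses $U_0,U_1\in\tau\cap M$ exist, and $z_i\in\cl(A_i)\subseteq U_i$ finishes the proof. That is precisely what the paper does.

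Your attempted ``main step'' --- reducing to a single $U_0$, covering $A_0$ by a $\kappa$-sized family of small neighbourhoods, worrying whether $z_0$ lands in the union rather than its closure, and contemplating an iteration --- is all superfluous, and the difficulties you identify (closure of a $\kappa$-union overshooting; $z_0\notin M$) are self-inflicted. The ``ingenious core'' you anticipate is not a delicate interplay of regularity and tightness but simply the realization that you should reflect the \emph{existence} of a separation of $\cl(A_0)$ and $\cl(A_1)$, not try to build one by hand inside $M$.
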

\begin{proof}
Fix distinct $z_0,z_1\in Z$. Since $X$ is Hausdorff, we can fix $U_i\in \tau$ for $i\in 2$ such that $z_i\in U_i$ and $U_0\cap U_1=\varnothing$. Since $X$ is regular, we can fix $V_i\in \tau$ for $i\in 2$ such that $z_i\in V_i\subseteq \cl(V_i)\subseteq U_i$.
Since $t(X)\leq\kappa$, there exist $A_i\in [V_i\cap M]^{\leq\kappa}$ for $i\in 2$ such that $z_i\in\cl(A_i)$. Notice that each $A_i\in M$ because $M$ is $\kappa$-closed. Therefore
$$
M\vDash\textrm{There exist $U_0,U_1\in \tau$ such that $U_0\cap U_1=\varnothing$ and $\cl(A_i)\subseteq U_i$ for each $i$}
$$
by elementarity, which yields the desired $U_0,U_1$.
\end{proof}

\begin{theorem}\label{lctimpliesnetwork} Let $\kappa$ be an infinite cardinal. Assume that $(X,\tau)$ is a regular space with $\ell(X)\leq\kappa$ and $t(X)\leq\kappa$. Let $M$ be a $\kappa$-closed elementary submodel such that $(X,\tau)\in M$, and let $Z=\cl(X\cap M)$. Then $\{A\cap Z:A\in M\}$ is a network for $Z$.
\end{theorem}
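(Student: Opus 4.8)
The plan is to verify the network condition directly. Since every neighborhood of a point $z\in Z$ in the subspace $Z$ contains a set of the form $V\cap Z$ with $V\in\tau$ and $z\in V$, and since $z\in Z$ always, it suffices to show the following: for every $z\in Z$ and every $V\in\tau$ with $z\in V$ there exists $A\in M$ with $z\in A$ and $A\cap Z\subseteq V$. Then $A\cap Z$ is a member of $\{A\cap Z:A\in M\}$ lying between $z$ and $V\cap Z$, which is all a network needs to provide.

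So fix $z\in Z$ and $V\in\tau$ with $z\in V$. For every $y\in Z\setminus V$ we have $y\neq z$, so Lemma \ref{hausdorff} provides disjoint $U_0^y,U_1^y\in\tau\cap M$ with $z\in U_0^y$ and $y\in U_1^y$. Hence $\{U_1^y\cap Z:y\in Z\setminus V\}$ is a cover of $Z\setminus V$ by sets open in $Z$. Now $Z$ is closed in $X$ and $V\cap Z$ is open in $Z$, so $Z\setminus V$ is closed in $X$, whence $\ell(Z\setminus V)\leq\ell(X)\leq\kappa$. Choose $\{y_\alpha:\alpha<\kappa\}\subseteq Z\setminus V$ with $Z\setminus V\subseteq\bigcup_{\alpha<\kappa}U_1^{y_\alpha}$, and put $\GG=\{U_1^{y_\alpha}:\alpha<\kappa\}$, $G=\bigcup\GG$ and $A=X\setminus G$.

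It remains to check that this $A$ works. Since $\GG\subseteq\tau\cap M\subseteq M$ and $|\GG|\leq\kappa$, the $\kappa$-closedness of $M$ yields $\GG\in M$, and then $G\in M$ and $A\in M$, both being defined from $\GG$ and $X$, which belong to $M$. For each $\alpha<\kappa$ the relations $z\in U_0^{y_\alpha}$ and $U_0^{y_\alpha}\cap U_1^{y_\alpha}=\varnothing$ give $z\notin U_1^{y_\alpha}$, hence $z\notin G$, i.e.\ $z\in A$; together with $z\in Z$ this yields $z\in A\cap Z$. Finally $Z\setminus V\subseteq G$ gives $A\cap Z=Z\setminus G\subseteq V$, as required.

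The one step that calls for an idea is the decision to separate $z$, via Lemma \ref{hausdorff}, from each point of the complement $Z\setminus V$ rather than to try to build a neighborhood of $z$ directly: this is precisely what forces the witnessing open sets $U_1^y$ to lie in $M$, so that a subcover of $Z\setminus V$ of size $\leq\kappa$ (extracted using the Lindel\"of hypothesis) can be absorbed back into $M$ by $\kappa$-closedness and then complemented. The remaining ingredients—that a closed subspace inherits the bound on the Lindel\"of number, and that unions and complements formed from parameters in $M$ again lie in $M$—are routine.
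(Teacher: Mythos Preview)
Your proof is correct and follows essentially the same route as the paper: separate $z$ from each point of $Z\setminus V$ via Lemma~\ref{hausdorff}, extract a $\kappa$-sized subcover using $\ell(X)\leq\kappa$, and absorb the resulting family into $M$ by $\kappa$-closedness. The only cosmetic difference is that the paper takes $A=\bigcap_{x\in C}V_x$ (the intersection of the $z$-side neighborhoods), whereas you take $A=X\setminus\bigcup_\alpha U_1^{y_\alpha}$ (the complement of the union of the $y$-side neighborhoods); both choices lie in $M$ and trap $z$ inside $V\cap Z$ for the same reason.
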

\begin{proof}
Fix $z\in Z$ and assume that $z\in O\in\tau$. For each $x\in Z\setminus O$, use
Lemma \ref{hausdorff} to get $U_x,V_x\in\tau\cap M$ such that $x\in U_x$, $z\in
V_x$ and $U_x\cap V_x=\varnothing$.
Since $Z\setminus O$ is closed in $X$, there exists $C\in [Z\setminus O]^{\leq\kappa}$ such that $Z\setminus O\subseteq\bigcup\{U_x:x\in C\}$. Notice
that $\VV=\{V_x:x\in C\}\in M$ because $M$ is $\kappa$-closed. Hence
$A=\bigcap\VV\in M$ as well. The fact that $z\in A\cap Z\subseteq O$ concludes
the proof.
\end{proof}

\section{Countable tightness}

In this section, we give an affirmative answer to Question \ref{qmichael} for spaces of countable tightness (see Theorem \ref{mainct}). The main ingredients of the proof are Theorem \ref{lctimpliesnetwork} and Corollary \ref{network}. The following result first appeared as \cite[Lemma 3.3]{burtontall}. For a proof of a slightly more general result, see Corollary \ref{nosepbt}.

\begin{theorem}[Burton, Tall]\label{bttheorem}
Assume $\CH$. If $X$ is a productively Lindel\"of space such that
$\ell(X^\omega)\leq\cccc$ then $X$ is powerfully Lindel\"of.
\end{theorem}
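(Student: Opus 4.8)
The plan is to derive a contradiction from the assumption that $X^\omega$ is not Lindel\"of, by \emph{reflecting} a hypothetical counterexample cover into a small elementary submodel and invoking Alster's Theorem \ref{atheorem} there. So suppose $X^\omega$ is not Lindel\"of. Since $X$ is regular (being productively Lindel\"of, hence Lindel\"of), so is $X^\omega$; hence $X^\omega$ is not quasi-Lindel\"of. Passing first to a refinement by basic open sets, and then, using $\ell(X^\omega)\leq\cccc$, to a subcover of size at most $\cccc$, we obtain a cover $\UU_0$ of $X^\omega$ consisting of basic open sets with $|\UU_0|\leq\cccc$ and no countable subcover (any subcover of $\UU_0$ is a subcover of the original cover). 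Fix an $\aleph_0$-closed elementary submodel $M$ with $|M|=\cccc$, $\{X,\tau\}\cup\UU_0\subseteq M$ and $\UU_0\in M$; this is possible because $|\UU_0|\leq\cccc=2^{\aleph_0}$. Since each $U\in\UU_0$ lies in $M$ and has the form $\prod_{i\in\omega}U_i$, the function $i\mapsto U_i$ belongs to $M$, so every coordinate $U_i$ lies in $\tau\cap M$. Put $Z=\cl(X\cap M)$.

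Now the reflection. As a closed subspace of $X$, the space $Z$ is productively Lindel\"of, and as a subspace of $X$ it is regular. Since $Z^\omega\subseteq X^\omega=\bigcup\UU_0$, the family $\{U\cap Z^\omega:U\in\UU_0\}$ is an open cover of $Z^\omega$. \emph{Suppose for a moment that $Z^\omega$ is Lindel\"of.} Then there is a countable $\UU_1\subseteq\UU_0$ with $Z^\omega\subseteq\bigcup\UU_1$, so in particular $(X\cap M)^\omega\subseteq\bigcup\UU_1$. Being a countable subset of $M$, the family $\UU_1$ belongs to $M$. Using that $X^\omega\cap M\subseteq(X\cap M)^\omega$ and that for $a\in M$ the statement ``$a\in\bigcup\UU_1$'' is absolute between $M$ and $V$ (as $\UU_1\subseteq M$), we get $M\vDash$ ``$X^\omega\subseteq\bigcup\UU_1$'', whence $X^\omega\subseteq\bigcup\UU_1$ by elementarity. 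This contradicts the choice of $\UU_0$. Thus everything reduces to showing that $Z^\omega$ is Lindel\"of.

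Since $Z$ is productively Lindel\"of and $\CH$ holds, Alster's Theorem \ref{atheorem} will finish the proof \emph{provided} $w(Z)\leq\cccc$, and I expect this to be the crux. Theorem \ref{lctimpliesnetwork} is tailored to exactly this: applied with $\kappa=\cccc$ (note $\ell(X)=\omega$) it shows that $\{A\cap Z:A\in M\}$ is a network for $Z$ of cardinality at most $|M|=\cccc$, so $nw(Z)\leq\cccc$ — but only under the extra hypothesis $t(X)\leq\cccc$. That hypothesis costs nothing in the setting feeding Theorem \ref{mainct} (there $t(X)\leq\omega$); the genuine difficulty is the gap between $nw(Z)\leq\cccc$ and $w(Z)\leq\cccc$. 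One cannot simply condense $Z$ onto a space of weight $\leq\cccc$, since continuous bijections do not transfer Lindel\"ofness of powers back to the domain. The clean way around this — and the route the paper adopts for the more general Corollary \ref{nosepbt} — is to reduce at the outset to the case that $X$ is zero-dimensional: one may then take $\UU_0$ to have clopen coordinates, coarsen $X$ to the (zero-dimensional, hence regular) topology of weight $\leq\cccc$ generated by these at most $\cccc$ clopen sets — a space that remains productively Lindel\"of as a continuous image of $X$, and for which $\UU_0$ is still an open cover of the $\omega$-th power — and then apply Alster's theorem directly, dispensing with the elementary submodel entirely.
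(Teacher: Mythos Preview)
Your elementary-submodel setup is sound, and you correctly diagnose where it stalls: from the hypotheses alone there is no way to obtain $w(Z)\leq\cccc$, and $nw(Z)\leq\cccc$ is not enough to invoke Alster. The suggested fix --- reduce to the zero-dimensional case and then coarsen the topology --- has the right shape, and the steps \emph{after} the reduction are correct: the coarsened space $X'$ is zero-dimensional of weight $\leq\cccc$, it is productively Lindel\"of as a continuous regular image of $X$, Alster makes $(X')^\omega$ Lindel\"of, and $\UU_0$ is still an open cover there. But the reduction to zero-dimensional is the whole difficulty, and you have not justified it. Corollary~\ref{nosepmichael} only says that Michael's \emph{general} question is insensitive to separation axioms; it does not say that a counterexample to Theorem~\ref{bttheorem} --- one that additionally satisfies $\ell(X^\omega)\leq\cccc$ --- can be traded for a zero-dimensional one with the same extra property. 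So as written the argument has a genuine gap.

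The paper does not reduce $X$ to a zero-dimensional space; rather, Lemma~\ref{nosepccover} builds an \emph{auxiliary} zero-dimensional space from the given cover. Given basic open sets $U_\alpha=\prod_i U^\alpha_i$ ($\alpha<\cccc$) covering $X^\omega$, one defines an upper-semicontinuous compact-valued map $\Phi:X\to\PP(2^{\cccc\times\omega})$ that records, for each $x\in X$, which coordinate sets $U^\alpha_i$ contain it; productive Lindel\"ofness transfers along such maps, so $Y=\bigcup_{x\in X}\Phi(x)$ is a productively Lindel\"of subspace of $2^{\cccc\times\omega}$ with $w(Y)\leq\cccc$, and Alster applies to $Y$. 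A matching cover $\{V_\alpha\}$ of $Y^\omega$ is arranged so that any countable subcover pulls back to one for $\{U_\alpha\}$. This set-valued-mapping step is precisely the missing idea --- in effect it performs your ``coarsening'' without needing $X$ to be zero-dimensional. Once Lemma~\ref{nosepccover} is in hand, Theorem~\ref{bttheorem} is immediate (refine to basic open sets, take a size-$\cccc$ subcover using $\ell(X^\omega)\leq\cccc$, apply the lemma), and the elementary submodel is not needed at all; it only enters for the genuinely stronger point-$\cccc$ hypothesis of Theorem~\ref{mainpc}.
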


\begin{corollary}\label{network}
Assume $\CH$. If $X$ is a productively Lindel\"of space with $nw(X)\leq\cccc$
then $X$ is powerfully Lindel\"of.
\end{corollary}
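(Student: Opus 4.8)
The plan is to reduce everything to Theorem~\ref{bttheorem}, which carries all the set-theoretic content: it is enough to show that $\ell(X^\omega)\leq\cccc$, since $X$ is assumed to be productively Lindel\"of, and then Theorem~\ref{bttheorem} finishes the argument. For this I would invoke two elementary topological inequalities, both provable in $\mathsf{ZFC}$ for arbitrary spaces.

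First, I would check that network-weight does not grow under countable products, namely $nw(X^\omega)\leq\max(nw(X),\aleph_0)$. Fixing a network $\NN$ for $X$ of size at most $\kappa:=\max(nw(X),\aleph_0)$ and writing $\pi_i$ for the $i$-th projection of $X^\omega$, one considers the family of all finite intersections $\bigcap_{i\in F}\pi_i^{-1}(N_i)$ with $F\in[\omega]^{<\omega}$ and $N_i\in\NN$ for $i\in F$. A routine verification against basic open sets of $X^\omega$ shows that this family is a network for $X^\omega$, and it has size at most $\aleph_0\cdot\kappa^{<\omega}=\kappa$.

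Second, I would use the standard fact that $\ell(Y)\leq nw(Y)$ for every space $Y$: given a network $\NN$ for $Y$ and an open cover $\UU$ of $Y$, let $\NN'=\{N\in\NN:N\subseteq U\text{ for some }U\in\UU\}$ and choose, for each $N\in\NN'$, a member $U_N\in\UU$ with $N\subseteq U_N$. Since $\NN$ is a network, $\{U_N:N\in\NN'\}$ still covers $Y$, and it has size at most $|\NN|$.

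Combining these with the hypothesis $nw(X)\leq\cccc$ yields $\ell(X^\omega)\leq nw(X^\omega)\leq\max(nw(X),\aleph_0)\leq\cccc$, so that Theorem~\ref{bttheorem} applies and $X$ is powerfully Lindel\"of. I do not expect any genuine obstacle here beyond Theorem~\ref{bttheorem} itself; the only point requiring mild care is the cardinal arithmetic $\kappa^{<\omega}=\kappa$ for infinite $\kappa$, which is what keeps the product network of size $\leq\cccc$ and hence keeps us in the range covered by Theorem~\ref{bttheorem}.
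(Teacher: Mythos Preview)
Your proposal is correct and is exactly the (implicit) argument the paper intends: Corollary~\ref{network} is stated immediately after Theorem~\ref{bttheorem} with no separate proof, and the missing link is precisely the chain $\ell(X^\omega)\leq nw(X^\omega)\leq\max(nw(X),\aleph_0)\leq\cccc$ that you spell out. The two standard inequalities you verify are the ones the reader is expected to supply.
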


\begin{theorem}\label{mainct} Assume $\CH$. Let $(X,\tau)$ be a productively Lindel\"of space of countable tightness. Then $X$ is powerfully Lindel\"of.
\end{theorem}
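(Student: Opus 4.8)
The plan is to combine Theorem~\ref{lctimpliesnetwork} with Corollary~\ref{network} via an elementary submodel argument. Since $X$ is productively Lindel\"of, $X^n$ is Lindel\"of for every $n\in\omega$, and in fact $X$ (being productively Lindel\"of) is Lindel\"of, so $\ell(X)\leq\aleph_0$; moreover, since countable tightness is finitely productive, $t(X^n)\leq\aleph_0$ for every $n$. Assuming $\CH$, fix an $\aleph_0$-closed elementary submodel $M$ with $(X,\tau)\in M$ and $|M|=\cccc=\aleph_1$. I would work with $Z=\cl(X\cap M)$ as a subspace of $X$.

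First I would check that $Z$ is itself productively Lindel\"of. The key point is that $Z$, being a closed subspace of the Lindel\"of (hence by our convention regular) space $X$, is closed; but closed subspaces of productively Lindel\"of spaces are productively Lindel\"of (if $Z$ is closed in $X$ and $Y$ is Lindel\"of, then $Z\times Y$ is closed in $X\times Y$, which is Lindel\"of). So $Z$ is productively Lindel\"of. Next, Theorem~\ref{lctimpliesnetwork} applied with $\kappa=\aleph_0$ — using $\ell(X)\leq\aleph_0$ and $t(X)\leq\aleph_0$ — gives that $\{A\cap Z:A\in M\}$ is a network for $Z$. Since $|M|=\aleph_1=\cccc$, this network has size at most $\cccc$, so $nw(Z)\leq\cccc$. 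By Corollary~\ref{network}, $Z$ is powerfully Lindel\"of, i.e.\ $Z^\omega$ is Lindel\"of.

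The remaining — and main — task is to transfer Lindel\"ofness of $Z^\omega$ back to $X^\omega$. Here I would use elementarity of $M$ together with a reflection argument: given an open cover $\UU$ of $X^\omega$, I may assume $\UU\in M$ (working with a basic open cover witnessing some name in $M$, or covering by basic open sets with parameters in $M$). The idea is that $\{U\cap Z^\omega:U\in\UU\cap M\}$ covers $Z^\omega$ because for each point $z\in Z^\omega$ and each coordinate, the relevant witnessing basic open set can be found inside $M$ by the network property and elementarity; then extract a countable subcover of $Z^\omega$, which lies in $M$, and argue by elementarity that the corresponding countably many members of $\UU$ already cover $X^\omega$ — since $M\vDash$ ``these sets cover $X^\omega$'' would follow if they cover $X\cap M$ densely and the cover is open, using that $X\cap M$ is dense in... no: rather, one shows $M$ believes $\UU$ restricted to these countably many elements covers, because failing to cover is witnessed by a point, which reflects into $X\cap M\subseteq Z$, contradiction. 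This reflection step — correctly handling the $\omega$-th power, ensuring the countable subcover of $Z^\omega$ is coded in $M$, and pushing the statement ``$X^\omega$ is covered'' down to $M$ — is where the real care is needed, and is the main obstacle; the rest is bookkeeping with $\CH$ and $\aleph_0$-closed submodels.
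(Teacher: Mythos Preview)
Your outline is essentially the paper's proof: pick an $\omega$-closed $M$ of size $\cccc$, set $Z=\cl(X\cap M)$, use Theorem~\ref{lctimpliesnetwork} and Corollary~\ref{network} to see $Z^\omega$ is Lindel\"of, and reflect. Two clarifications are worth making.

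First, the covering step ``$\UU\cap M$ covers $Z^\omega$'' is not proved via the network property but directly from countable tightness (the same idea as in Lemma~\ref{hausdorff}): fix $\UU\in M$ from the outset; given $z\in Z^\omega$, choose $U\in\UU$ and a basic $V=\prod V_i$ with $z\in V\subseteq\cl(V)\subseteq U$; since each $z_i\in\cl(X\cap M)$ and $V_i$ is open, $z_i\in\cl(V_i\cap M)$, so by $t(X)\leq\omega$ there is a countable $A_i\subseteq V_i\cap M$ with $z_i\in\cl(A_i)$; then $A=\prod_i A_i\in M$ by $\omega$-closure, and elementarity gives $U'\in\UU\cap M$ with $\cl(A)\subseteq U'$, hence $z\in U'$. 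Trying to route this through the network $\{A\cap Z:A\in M\}$ runs into the problem that $A\cap Z\subseteq V_i$ does not give $A\subseteq V_i$, so the elementarity step does not go through cleanly.

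Second, the reflection at the end is simpler than your sketch suggests: once $\VV\in[\UU\cap M]^{\leq\omega}$ covers $Z^\omega$, $\omega$-closure gives $\VV\in M$ and $X^\omega\cap M=(X\cap M)^\omega\subseteq Z^\omega\subseteq\bigcup\VV$, so $M\vDash\text{``}\VV$ covers $X^\omega$''; elementarity finishes. (Also: countable tightness is \emph{not} finitely productive in general, but you do not need that claim anywhere.)
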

\begin{proof}
Fix an open cover $\UU$ of $X^\omega$. Let $M$ be an $\omega$-closed elementary submodel such that $\{(X,\tau),\UU\}\subseteq M$ and $|M|=\cccc$. Let $Z=\cl(X\cap M)$.

First, we will show that $Z^\omega\subseteq\bigcup(\UU\cap M)$. So fix $z=(z_i:i\in\omega)\in Z^\omega$. Fix $U\in\UU$ such that $z\in U$. Let $V=\prod_{i\in\omega}V_i$ be such that $z\in V\subseteq\cl(V)\subseteq U$, where each $V_i\in\tau$ and $V_i=X$ for all but finitely many $i$. Given any $i\in\omega$, since $z_i\in\cl(V_i\cap M)$, we can fix $A_i\in[V_i\cap M]^{\leq\omega}$ such that $z_i\in \cl(A_i)$. Using the fact that $M$ is $\omega$-closed, it is easy to see that $A=\prod_{i\in\omega}A_i\in M$. Therefore
$$
M\vDash\textrm{There exists $U\in\UU$ such that $\cl(A)\subseteq U$}
$$
by elementarity, which yields $U\in\UU\cap M$ such that $z\in U$.

Observe that $Z$ is productively Lindel\"of because it is a closed subspace of $X$. Furthermore, it follows from Theorem \ref{lctimpliesnetwork} that $nw(Z)\leq |M|=\cccc$. Therefore $Z$ is powerfully Lindel\"of by Corollary \ref{network}, hence there exists $\VV\in[\UU\cap M]^{\leq\omega}$ such that $Z^\omega\subseteq\bigcup\VV$. Notice that $\VV\in M$ and $X^\omega\cap M=(X\cap M)^\omega\subseteq Z^\omega$ because $M$ is $\omega$-closed. It follows that
$$
M\vDash\textrm{$\VV$ is a cover of $X^\omega$}.
$$
Therefore, $\VV$ is a cover of $X^\omega$ by elementarity.
\end{proof}

\section{New proofs of results of Arhangel'skii and Buzyakova}

The following two results are \cite[Corollary 3.4]{arhangelskiibuzyakova} and \cite[Theorem 4.2]{arhangelskiibuzyakova}.
Recall that a space is \emph{linearly Lindel\"of} if it is regular and every open cover of $X$ that is linearly ordered by $\subseteq$ has a countable subcover.
\begin{theorem}[Arhangel'skii, Buzyakova]\label{ab1}
Assume $\CH$. Let $X$ be a Tychonoff space of countable tightness such that every open cover of $X$ of size at most $\omega_1$ has a countable subcover. Then $X$ is Lindel\"of.
\end{theorem}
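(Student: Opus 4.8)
The plan is to run the same elementary-submodel argument that proves Theorem~\ref{mainct}, but in the ``$\omega$-indexed'' world rather than the ``$\omega$-power'' world: instead of covering $X^\omega$ we cover $X$ itself, instead of a $\kappa$-closed model we take a model of size $\omega_1$, and instead of Corollary~\ref{network} we use the Lindel\"of version of the same principle (a productively Lindel\"of space with $nw(X)\le\omega_1$ is Lindel\"of --- but under $\CH$ this is just the $nw(X)\le\cccc$ case, and in fact for the \emph{Lindel\"of} conclusion we only need Alster-type input, or simply that a space of network-weight $\le\omega_1=\cccc$ whose open covers of size $\le\omega_1$ reduce is Lindel\"of, which is immediate). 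Concretely, fix an open cover $\UU$ of $X$; we want a countable subcover.

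First I would take an elementary submodel $M$ with $\{(X,\tau),\UU\}\subseteq M$, $|M|=\omega_1$, and $[M]^{\le\omega}\subseteq M$ (possible under $\CH$ since $\omega_1^\omega = \cccc = \omega_1$ when... no: $\omega_1^\omega=\cccc$, so $[M]^{\le\omega}$ has size $\cccc=\omega_1$, and one builds $M$ as an increasing $\omega_1$-chain of countable-cofinality-$\omega_1$ submodels closed under countable sequences, using $\CH$ to keep $|M|=\omega_1$). Put $Z=\cl(X\cap M)$. Note $X$ is regular (Tychonoff) and $t(X)\le\omega$; the only missing hypothesis of Theorem~\ref{lctimpliesnetwork} is $\ell(X)\le\omega$, but that theorem's proof uses $\ell(X)\le\kappa$ only to extract a $\kappa$-sized subcover of the closed set $Z\setminus O$ from a cover by sets in $\tau\cap M$. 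Here that cover, being in $M$ and of size $\le|M|=\omega_1$, \emph{is} reduced to a countable subcover by the hypothesis on $X$ (applied inside $M$, since a countable subcover exists and hence exists in $M$ by elementarity) --- so with $\kappa=\omega$ the conclusion still goes through and $\{A\cap Z : A\in M\}$ is a network for $Z$, giving $nw(Z)\le|M|=\omega_1$.

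Second, exactly as in the proof of Theorem~\ref{mainct} (dropping the $\omega$-powers everywhere), I would show $Z\subseteq\bigcup(\UU\cap M)$: given $z\in Z$, pick $U\in\UU$ with $z\in U$, shrink to $z\in V\subseteq\cl(V)\subseteq U$ by regularity, pick $A\in[V\cap M]^{\le\omega}$ with $z\in\cl(A)$ (so $A\in M$), and conclude by elementarity that there is $U\in\UU\cap M$ with $z\in\cl(A)\subseteq U$. Now $Z$ is a closed subspace of $X$, hence every open cover of $Z$ of size $\le\omega_1$ has a countable subcover and $t(Z)\le\omega$, and $nw(Z)\le\omega_1$. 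The (easy) point is that a regular space of network-weight $\le\omega_1$ in which every open cover of size $\le\omega_1$ reduces to a countable one is Lindel\"of: refine any open cover to one of size $\le nw(Z)\le\omega_1$ using the network, then apply the hypothesis. Thus $Z$ is Lindel\"of, so $\UU\cap M$ (which covers $Z$) has a countable subcover $\VV\in[\UU\cap M]^{\le\omega}$ with $Z\subseteq\bigcup\VV$; note $\VV\in M$ by $\omega$-closure.

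Finally, since $X\cap M\subseteq Z\subseteq\bigcup\VV$, we have $M\vDash\text{``}\VV\text{ covers }X\text{''}$, and by elementarity $\VV$ covers $X$, giving the desired countable subcover. \textbf{The main obstacle} is the bookkeeping in Step~1: verifying that Theorem~\ref{lctimpliesnetwork} survives the replacement of the global hypothesis $\ell(X)\le\omega$ by the weaker ``$\omega_1$-covers reduce,'' i.e.\ that the only use of the Lindel\"of number is the one reduction of a size-$\le|M|$ cover living in $M$, which is handled by elementarity plus the hypothesis on $X$. (One should double-check that $C\in[Z\setminus O]^{\le\omega}\subseteq M$, which again uses $\omega$-closure of $M$, exactly as in the original proof with $\kappa=\omega$.) Everything else is a routine transcription of the proof of Theorem~\ref{mainct}.
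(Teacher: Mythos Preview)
Your proof is correct but takes an unnecessary detour compared with the paper's argument (which derives the result from Theorem~\ref{ab1new} with $\kappa=\omega$). The paper proceeds exactly as in your second and final steps---show $Z\subseteq\bigcup(\UU\cap M)$, extract a countable $\VV\subseteq\UU\cap M$ covering $Z$, then reflect---but it obtains $\VV$ \emph{directly} from the hypothesis on $X$: since $|\UU\cap M|\leq|M|=\omega_1$ and $Z$ is closed in $X$, the family $(\UU\cap M)\cup\{X\setminus Z\}$ is an open cover of $X$ of size at most $\omega_1$ and hence has a countable subcover. Your entire first step---adapting Theorem~\ref{lctimpliesnetwork} to conclude $nw(Z)\leq\omega_1$, and then arguing that $Z$ is Lindel\"of---is thus superfluous, though correct. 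Your route yields a little extra information about $Z$ that the statement does not need; the paper's route replaces it with a one-line observation.

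Two minor corrections to your justifications in the adaptation of Theorem~\ref{lctimpliesnetwork}. First, the family $\{U_x:x\in Z\setminus O\}$ is a \emph{subset} of $M$ but in general not an \emph{element} of $M$, so you cannot ``apply the hypothesis inside $M$ by elementarity''; instead apply it externally, using only that the family has size at most $|M|=\omega_1$ and covers the closed set $Z\setminus O$. Second, you do not need (and cannot expect) $C\in M$, since $C\subseteq Z\setminus O$ need not lie in $M$; what you need is $\VV=\{V_x:x\in C\}\in M$, which follows because $\VV\in[M]^{\leq\omega}\subseteq M$.
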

\begin{theorem}[Arhangel'skii, Buzyakova]\label{ab2}
Assume $\GCH$. Let $X$ be a Tychonoff linearly Lindel\"of space such that $t(X)<\omega_\omega$. Then $X$ is Lindel\"of.
\end{theorem}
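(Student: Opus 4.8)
The plan is to prove directly that an arbitrary open cover $\UU$ of $X$ has a countable subcover, imitating the scheme of the proof of Theorem \ref{mainct}. Two elementary facts about a linearly Lindel\"of space $Y$ will do most of the work. \textbf{Fact 1:} if $\lambda$ is a cardinal with $\mathrm{cf}(\lambda)>\omega$, then every open cover of $Y$ of cardinality at most $\lambda$ has a subcover of cardinality $<\lambda$. To see this, list the cover as $\{U_\alpha:\alpha<\lambda\}$ and set $W_\beta=\bigcup_{\alpha<\beta}U_\alpha$; if no $W_\beta$ with $\beta<\lambda$ equals $Y$, then $\{W_\beta:\beta<\lambda\}$ is a chain of proper open sets covering $Y$, and any countable subcover of it would have its indices bounded below $\lambda$ because $\mathrm{cf}(\lambda)>\omega$, which is absurd. \textbf{Fact 2:} if in addition $\ell(Y)<\omega_\omega$, then $Y$ is Lindel\"of. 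For if not, choose an open cover $\VV$ of $Y$ with no countable subcover and with $|\VV|$ as small as possible; then every subcover of $\VV$ has cardinality $|\VV|$ (by minimality), so $\ell(Y)\geq|\VV|$ and, by Fact 1, $\mathrm{cf}(|\VV|)=\omega$, whence $|\VV|\geq\omega_\omega$ --- contradicting $\ell(Y)<\omega_\omega$.

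Now I would set $\kappa=t(X)$, so that $\kappa\leq\omega_n$ for some $n\in\omega$. Fixing an open cover $\UU$ of $X$, I would use $\GCH$ to choose a $\kappa$-closed elementary submodel $M$ with $\{(X,\tau),\UU\}\subseteq M$ and $|M|=2^\kappa=\kappa^+\leq\omega_{n+1}<\omega_\omega$, and let $Z=\cl(X\cap M)$. Exactly as in Theorem \ref{mainct} (with $\kappa$ in place of $\omega$ and a single coordinate in place of $\omega$ many), regularity and $t(X)\leq\kappa$ give $Z\subseteq\bigcup(\UU\cap M)$: for $z\in Z$ pick $U\in\UU$ with $z\in U$ and $V\in\tau$ with $z\in V\subseteq\cl(V)\subseteq U$, then $A\in[V\cap M]^{\leq\kappa}$ with $z\in\cl(A)$; since $M$ is $\kappa$-closed we have $A\in M$, and $M\vDash\exists U'\in\UU\,(\cl(A)\subseteq U')$, which produces a member of $\UU\cap M$ containing $z$. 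Next I claim $nw(Z)\leq\kappa^+$. This is obtained by running the proof of Theorem \ref{lctimpliesnetwork} with one change: where that argument uses $\ell(X)\leq\kappa$ to reduce the open cover $\{U_x:x\in Z\setminus O\}$ of the closed set $Z\setminus O$ to size $\leq\kappa$, one instead chooses the $U_x$ inside $\tau\cap M$, so that $\{U_x:x\in Z\setminus O\}\cup\{X\setminus(Z\setminus O)\}$ is an open cover of $X$ of cardinality at most $|M|=\kappa^+$, and then applies Fact 1 with $\lambda=\kappa^+$. Since $Z$ is closed in $X$ it is linearly Lindel\"of, and $\ell(Z)\leq nw(Z)\leq\kappa^+<\omega_\omega$, so $Z$ is Lindel\"of by Fact 2.

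The conclusion is then reached just as in Theorem \ref{mainct}: because $\UU\cap M$ covers the Lindel\"of space $Z$, there is a countable $\VV\in[\UU\cap M]^{\leq\omega}$ with $Z\subseteq\bigcup\VV$; since $M$ is $\omega$-closed we get $\VV\in M$, and from $X\cap M\subseteq Z\subseteq\bigcup\VV$ it follows that $M$ models ``$\VV$ covers $X$'', hence $\VV$ covers $X$ by elementarity. So $\UU$ has a countable subcover and $X$ is Lindel\"of. The only real use of set theory is that $M$ may be taken $\kappa$-closed of size below $\omega_\omega$, which is where $\GCH$ and the hypothesis $t(X)<\omega_\omega$ come in; accordingly I expect the main obstacle to be conceptual rather than computational --- recognizing that linear Lindel\"ofness is precisely the property needed both to push the proof of Theorem \ref{lctimpliesnetwork} through for open covers of size $\kappa^+$ and to force a space with Lindel\"of number below $\omega_\omega$ to be Lindel\"of. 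Once these observations are in place, the remaining verifications are routine.
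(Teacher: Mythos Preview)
Your proof is correct and follows the same elementary-submodel scheme as the paper's (Theorem~\ref{ab2new}): take a $\kappa$-closed $M$ of size $2^\kappa<\omega_\omega$, show $Z=\cl(X\cap M)\subseteq\bigcup(\UU\cap M)$, extract a countable $\VV\subseteq\UU\cap M$ covering $Z$, and reflect. The difference is in the middle step. You take a detour through a modified Theorem~\ref{lctimpliesnetwork} to obtain $nw(Z)\leq\kappa^+$, deduce $\ell(Z)<\omega_\omega$, and then apply your Fact~2 to conclude that $Z$ is Lindel\"of. The paper is more direct: iterating your Fact~1 finitely many times already shows that in a linearly Lindel\"of space every open cover of size below $\omega_\omega$ has a countable subcover; since $|\UU\cap M|\leq|M|=2^\kappa<\omega_\omega$ and $Z$ is closed in $X$, a countable $\VV\in[\UU\cap M]^{\leq\omega}$ covering $Z$ is obtained immediately, with no appeal to Theorem~\ref{lctimpliesnetwork} and no need to bound $nw(Z)$ at all. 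Your network-weight argument is correct but superfluous here; the paper's route isolates the single combinatorial fact about linearly Lindel\"of spaces that is really doing the work.
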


Using the same techniques as in the previous section, we will give new proofs of
the above results. In fact, it is clear that Theorem \ref{ab1} follows from Theorem \ref{ab1new} and that Theorem \ref{ab2} follows from Theorem \ref{ab2new}.
Notice that the assumption ``Tychonoff'' has been weakened to ``regular''.
\begin{theorem}\label{ab1new}
Let $\kappa$ be an infinite cardinal. Assume that $(X,\tau)$ is a regular space such that $t(X)\leq\kappa$ and every open cover of $X$ of size at most $2^\kappa$ admits a subcover of size at most $\kappa$. Then $\ell(X)\leq\kappa$.
\end{theorem}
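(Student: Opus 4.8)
The plan is to mimic the submodel argument used in the proof of Theorem \ref{mainct}, but now producing a \emph{countable} subcover of a given open cover of $X$ itself rather than establishing powerful Lindel\"ofness. So fix an open cover $\UU$ of $X$; we may assume $|\UU|\leq 2^\kappa$, since otherwise we can first thin it out: every point of $X$ lies in some member of $\UU$, and a standard transfinite construction (closing off under the hypothesis ``covers of size $\leq 2^\kappa$ have subcovers of size $\leq\kappa$'') is not even needed here because we really only need $\UU$ to be a witness cover --- more carefully, first note that it suffices to prove the statement for covers $\UU$ with $|\UU|\leq 2^\kappa$, because an arbitrary open cover has an open refinement by members of a fixed base of size $\leq 2^{2^\kappa}$, which is still too big; instead the cleanest route is to just take $\UU$ arbitrary and pass to a $\kappa$-closed elementary submodel $M$ with $\{(X,\tau),\UU\}\subseteq M$ and $|M|=2^\kappa$, and work with $\UU\cap M$.

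Concretely, first I would let $M$ be a $\kappa$-closed elementary submodel with $(X,\tau)\in M$, $\UU\in M$, and $|M|=2^\kappa$, and set $Z=\cl(X\cap M)$. The first key step is to show that $\UU\cap M$ covers $Z$: given $z\in Z$, pick $U\in\UU$ with $z\in U$, then by regularity pick open $V$ with $z\in V\subseteq\cl(V)\subseteq U$, then by $t(X)\leq\kappa$ pick $A\in[V\cap M]^{\leq\kappa}$ with $z\in\cl(A)$; since $M$ is $\kappa$-closed, $A\in M$, so $M\vDash$ ``$\exists U\in\UU\ (\cl(A)\subseteq U)$'', which gives a member of $\UU\cap M$ containing $z$. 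This is exactly the pattern of the first half of the proof of Theorem \ref{mainct}. The second key step: by Theorem \ref{lctimpliesnetwork} applied with this $\kappa$ (noting $Z$ inherits regularity and $t(Z)\leq\kappa$, and $\ell(Z)\leq\kappa$ because... wait, that is circular).

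So the genuine obstacle, and the place where the hypothesis must be used, is establishing $\ell(Z)\leq\kappa$ in order to invoke Theorem \ref{lctimpliesnetwork}. I would handle this directly: $|Z|\leq|X\cap M|^{?}$ is not bounded, but $nw(Z)\leq|M|=2^\kappa$ trivially (the family $\{A\cap Z:A\in M\}$ has size $\leq 2^\kappa$), hence $w(Z)\leq 2^{nw(Z)}$ is too big --- better: $Z$ is the closure of a set of size $\leq 2^\kappa$, so $|Z|\leq 2^{2^\kappa}$, not obviously enough. The right move is to bound $|Z|\leq 2^\kappa$: indeed $|X\cap M|\leq|M|=2^\kappa$, and by regularity together with $t(X)\leq\kappa$ one shows every point of $Z=\cl(X\cap M)$ is the limit of a $\kappa$-sized subset of $X\cap M$, so $|Z|\leq (2^\kappa)^\kappa=2^\kappa$. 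Then $\UU\cap M$ is an open cover of $Z$ of size $\leq 2^\kappa$, so --- here is where one wants the hypothesis --- but the hypothesis is about covers of $X$, not of $Z$. To bridge this, observe that for each $W\in\UU\cap M$ the set $W\in M$, and one can instead cover $X$: the family $\{W:W\in\UU\cap M\}\cup\{X\setminus Z\}$ need not be open. Thus the cleanest correct plan is: enumerate $\UU\cap M=\{W_\xi:\xi<2^\kappa\}$ and for each $x\in X\cap M$ fix $\xi(x)$ with $x\in W_{\xi(x)}$; the subfamily $\UU'=\{W_{\xi(x)}:x\in X\cap M\}\in M$ has size $\leq 2^\kappa$ and, by the covering-of-$Z$ argument above, covers $Z\supseteq X\cap M$, so $M\vDash$ ``$\UU'$ covers $X$'', whence $\UU'$ covers $X$; now apply the hypothesis to $\UU'$ to extract a subcover of size $\leq\kappa$. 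That subcover witnesses $\ell(X)\leq\kappa$, completing the proof. The main difficulty, then, is exactly the bookkeeping that reduces an arbitrary cover to one of size $\leq 2^\kappa$ that still covers all of $X$ (via the submodel), so that the hypothesis can be applied --- everything else is a routine transcription of the elementary-submodel technique from Sections 2 and 3.
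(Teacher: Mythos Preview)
Your overall strategy matches the paper's: take a $\kappa$-closed elementary submodel $M$ of size $2^\kappa$ containing $(X,\tau)$ and $\UU$, set $Z=\cl(X\cap M)$, show that $\UU\cap M$ covers $Z$, extract a subfamily $\VV\in[\UU\cap M]^{\leq\kappa}$ covering $Z$, note $\VV\in M$ by $\kappa$-closedness, and reflect to conclude that $\VV$ covers $X$. Your argument that $\UU\cap M$ covers $Z$ is correct and is exactly what the paper does.

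The genuine gap is in how you extract the $\kappa$-sized subfamily. You correctly observe that the hypothesis concerns covers of $X$, not of $Z$, but then you dismiss the obvious bridge: you write that $(\UU\cap M)\cup\{X\setminus Z\}$ ``need not be open''. This is false: $Z=\cl(X\cap M)$ is closed by definition, so $X\setminus Z$ is open. Hence $(\UU\cap M)\cup\{X\setminus Z\}$ \emph{is} an open cover of $X$ of size at most $2^\kappa$; the hypothesis yields a subcover of size at most $\kappa$, and deleting $X\setminus Z$ (which misses $Z$) leaves $\VV\in[\UU\cap M]^{\leq\kappa}$ covering $Z$. This is precisely what the paper's sentence ``Since $|\UU\cap M|\leq 2^\kappa$, there exists $\VV\in[\UU\cap M]^{\leq\kappa}$ such that $Z\subseteq\bigcup\VV$'' is doing. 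The detours through Theorem~\ref{lctimpliesnetwork} and through bounding $|Z|$ are unnecessary.

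Your alternative route via $\UU'=\{W_{\xi(x)}:x\in X\cap M\}$ does not work as written. You assert $\UU'\in M$, but the choices $x\mapsto\xi(x)$ are made externally, and $|\UU'|$ may be as large as $2^\kappa$, so $\kappa$-closedness does not place $\UU'$ in $M$. You also claim that $\UU'$ covers $Z$ ``by the covering-of-$Z$ argument above'', but that argument only shows the larger family $\UU\cap M$ covers $Z$; there is no reason the particular sets $W_{\xi(x)}$ chosen for points of $X\cap M$ should cover the additional points of $Z\setminus(X\cap M)$. (For the elementarity step you would only need $\UU'$ to cover $X\cap M$, which it does; the fatal issue is that $\UU'\in M$ is unjustified.)
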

\begin{proof}
Fix an open cover $\UU$ of $X$. Let $M$ be a $\kappa$-closed elementary submodel such that $\{(X,\tau),\UU\}\subseteq M$ and $|M|=2^\kappa$. Let $Z=\cl(X\cap M)$.
As in the proof of Theorem \ref{mainct}, one can show that $Z\subseteq\bigcup(\UU\cap M)$.
Since $|\UU\cap M|\leq |M|=2^\kappa$, there exists $\VV\in [\UU\cap M]^{\leq\kappa}$ such that $Z\subseteq\bigcup\VV$. As in the proof of Theorem \ref{mainct}, one sees that $\VV$ is a cover of $X$.
\end{proof}

\begin{theorem}\label{ab2new}
Assume that $2^\kappa<\omega_\omega$ for every $\kappa<\omega_\omega$. Let $(X,\tau)$ be a linearly Lindel\"of space such that $t(X)<\omega_\omega$. Then $X$ is Lindel\"of.
\end{theorem}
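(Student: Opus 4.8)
The plan is to induct on the tightness $t(X)=\kappa<\omega_\omega$. Since $t(X)$ is some finite-indexed aleph, write $\kappa=\omega_n$ (or $\kappa=\omega$), and argue by induction on $n$. The base case $t(X)\leq\omega$ follows from Theorem \ref{ab1new}: under our cardinal arithmetic hypothesis $2^{\omega}<\omega_\omega$, and a linearly Lindel\"of space in which every open cover of size at most $2^{\omega}$ has a countable subcover is Lindel\"of; but linear Lindel\"ofness already gives that any open cover which is \emph{linearly ordered} by $\subseteq$ has a countable subcover, and a standard argument (well-order the cover in type at most $2^\omega$ and take unions of initial segments to produce a linearly ordered refinement, then pull back) shows every open cover of size at most $2^\omega$ admits a countable subcover. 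So Theorem \ref{ab1new} applies with $\kappa=\omega$ and yields $\ell(X)\leq\omega$.

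For the inductive step, suppose the result holds for all linearly Lindel\"of spaces of tightness strictly below $\kappa=\omega_{n+1}$, and let $t(X)\leq\kappa$. Following the method of Section 2, fix an open cover $\UU$ of $X$ linearly ordered by $\subseteq$ (it suffices to handle such covers, by the definition of linear Lindel\"ofness). Take a $\kappa$-closed elementary submodel $M$ with $\{(X,\tau),\UU\}\subseteq M$ and $|M|=2^\kappa$; by hypothesis $2^\kappa<\omega_\omega$, so the tightness of $Z=\cl(X\cap M)$ is still below $\omega_\omega$, and in fact one argues, using Theorem \ref{lctimpliesnetwork} together with Lemma \ref{hausdorff}, that $Z$ is regular with a network of size at most $2^\kappa$; hence $nw(Z)\leq 2^\kappa<\omega_\omega$, so $t(Z)\leq nw(Z)$ is some aleph below $\omega_\omega$. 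The subtle point is that we want $Z$ to have tightness \emph{strictly less} than $\kappa$ so that the inductive hypothesis applies — this is exactly the place where a more careful choice of $M$, or an auxiliary reduction, is needed: one takes instead an $\omega_n$-closed (rather than $\omega_{n+1}$-closed) elementary submodel, so that Theorem \ref{lctimpliesnetwork} gives $nw(Z)\leq 2^{\omega_n}$, and then $t(Z)\leq nw(Z)\leq 2^{\omega_n}$, which is some cardinal $<\omega_\omega$ but possibly still $\geq\omega_{n+1}$ — so this naive bound is not enough, and the real argument must instead show directly that $Z\subseteq\bigcup(\UU\cap M)$ and that $Z$ is a \emph{continuous image}, or closed subspace, of something whose Lindel\"of number we control, i.e. one applies the induction not to $Z$ but uses the network structure of $Z$ to find a countable subcover of $\UU\cap M$ covering $Z$.

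Concretely, the cleanest route is: (i) show $Z\subseteq\bigcup(\UU\cap M)$ exactly as in the proof of Theorem \ref{mainct} (using countable — or $\kappa$-sized — tightness to pull witnessing sets into $M$ via $\kappa$-closedness, then elementarity); (ii) observe $Z$ is linearly Lindel\"of (it is regular by Lemma \ref{hausdorff}, and a closed subspace of a linearly Lindel\"of space is linearly Lindel\"of) and $t(Z)\leq t(X)<\omega_\omega$; (iii) here split on whether $t(Z)<\kappa$, in which case the inductive hypothesis gives $\ell(Z)\leq\omega$ directly, or $t(Z)=\kappa$, in which case — and this is the crux — one iterates the submodel construction \emph{inside} $Z$: since $nw(Z)\leq 2^\kappa$, one is now in a setting where the weight-type bound is available, and one runs an Alster/Burton–Tall style argument, or more simply notes $nw(Z)\leq 2^\kappa$ forces $\ell(Z^\omega)\leq 2^\kappa$, reducing to a space of small network weight where countable subcovers of $\UU\cap M$ restricted to $Z$ exist by a direct counting/refinement argument; (iv) take $\VV\in[\UU\cap M]^{\leq\omega}$ covering $Z$, note $X\cap M\subseteq Z$ so $M\vDash$ "$\VV$ covers $X$", and conclude by elementarity that $\VV$ covers $X$. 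The main obstacle is precisely step (iii): controlling the tightness (or network weight) of the trace space $Z$ well enough that an inductive or Alster-type argument closes the loop, which is why the hypothesis $2^\kappa<\omega_\omega$ for all $\kappa<\omega_\omega$ (rather than just $\GCH$) is the natural one — it guarantees that every space arising in the transfinite recursion stays below $\omega_\omega$ in all the relevant cardinal invariants.
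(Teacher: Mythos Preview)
Your proposal has a genuine gap: step (iii) is never resolved, and the induction on $n$ is both unnecessary and unworkable as stated. You correctly observe in your base case that linear Lindel\"ofness yields countable subcovers for covers of size at most $2^\omega$ via the ``well-order, take increasing unions, pull back'' trick. What you miss is that this very argument applies to \emph{any} open cover of size below $\omega_\omega$: given a cover of size $\omega_n$, the pull-back reduces it to a cover of size below $\omega_n$ (since $\mathrm{cf}(\omega_n)=\omega_n>\omega$), and iterating $n$ times yields a countable subcover. This is exactly the observation the paper uses: once $Z\subseteq\bigcup(\UU\cap M)$ is established (your step (i)), one simply notes $|\UU\cap M|\leq 2^\kappa<\omega_\omega$, applies the above fact to the closed (hence linearly Lindel\"of) subspace $Z$, obtains a countable $\VV\subseteq\UU\cap M$ covering $Z$, and reflects via elementarity (your step (iv)). No induction, no analysis of $t(Z)$ or $nw(Z)$, no Alster-type machinery.

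Two further slips: in the inductive step you restrict to covers linearly ordered by $\subseteq$, but the goal is to prove $X$ is Lindel\"of, so you must start from an arbitrary open cover; and your attempt to force $t(Z)<\kappa$ cannot succeed in general, since $Z$ is a closed subspace of $X$ and there is no reason its tightness drops. The fix is not a cleverer choice of $M$ but to abandon the induction altogether and use the key fact about linear Lindel\"ofness and $\omega_\omega$ directly.
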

\begin{proof}
Fix an open cover $\UU$ of $X$. Let $\kappa<\omega_\omega$ be an infinite cardinal such that $t(X)\leq\kappa$. Let $M$ be a $\kappa$-closed elementary submodel such that $\{(X,\tau),\UU\}\subseteq M$ and $|M|=2^\kappa$. Let $Z=\cl(X\cap M)$.
As in the proof of Theorem \ref{mainct}, one can show that $Z\subseteq\bigcup(\UU\cap M)$. Using the fact that $X$ is linearly Lindel\"of, it is easy to see that every open cover of $X$ of size less than $\omega_\omega$ has a countable subcover.
Since $|\UU\cap M|\leq |M|=2^\kappa<\omega_\omega$, it follows that there exists $\VV\in [\UU\cap M]^{\leq\omega}$ such that $Z\subseteq\bigcup\VV$. As in the proof of Theorem \ref{mainct}, one sees that $\VV$ is a cover of $X$.
\end{proof}

\section{Dropping the separation axioms}

We will use the method of set-valued mappings introduced in \cite{zdomsky}.
Recall that a \emph{set-valued mapping} from a space $X$ to a space $Y$ is a
function $\Phi:X\longrightarrow\PP(Y)$, where $\PP(Y)$ denotes the power-set of
$Y$. A set-valued mapping from $X$ to $Y$ is
\emph{compact-valued} if $\Phi(x)$ is a compact subspace of $Y$ for every $x\in
X$. A set-valued mapping from $X$ to $Y$ is
\emph{upper semi-continuous} if $\{x\in X:\Phi(x)\subseteq V\}$ is open in $X$
for every open subset $V$ of $Y$. Given any set $S$, we will identify $2^S$ with $\PP(S)$ through characteristic functions. For $A\in 2^S$, let $A\!\uparrow\,\,=\{B\in 2^S:A\subseteq B\}$.

\begin{lemma}\label{nosepccover} Assume $\CH$. Let $X$ be a productively quasi-Lindel\"of space. Then every cover of $X^\omega$ of size $\cccc$ consisting of basic open sets has a countable subcover.
\end{lemma}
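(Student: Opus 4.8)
The plan is to carry out, in a setting with no separation axioms, an argument analogous to the one behind Theorem \ref{bttheorem}, replacing elementary submodels by the set-valued mapping method of \cite{zdomsky}; the assumption that the cover consists of basic open sets is what will compensate for the fact that, without regularity, we cannot even conclude that $X\times X$ is quasi-Lindel\"of. Using $\CH$, write the given cover as $\UU=\{U_\alpha:\alpha<\omega_1\}$, and for each $\alpha$ fix $n_\alpha<\omega$ with $U_\alpha=\prod_{i<\omega}V^\alpha_i$ and $V^\alpha_i=X$ whenever $i\geq n_\alpha$. Assuming toward a contradiction that $\UU$ has no countable subcover, the aim is to produce a Lindel\"of space $K$ and an open cover of $X\times K$ with no countable subcover, contradicting that $X$ is productively quasi-Lindel\"of.

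Everything has to be pushed onto a single copy of $X$. So I would fix a Lindel\"of space $K$ assembled from two ingredients: the index set $\omega_1$ together with a Lindel\"ofication point $\infty$ (the piece that drives the usual one-point-Lindel\"ofication argument), and a copy of $2^{\omega_1}$, whose relevant closed, hence compact, subsets are the sets $A\!\uparrow$. All coordinates of $X^\omega$ other than the first, and the entire bookkeeping of which $U_\alpha$ are needed, get encoded into $K$ via a compact-valued, upper semi-continuous map $\Phi\colon X\longrightarrow\PP(K)$: roughly, $\Phi(x)$ should be the amalgam with $\{\infty\}$ of a set $A_x\!\uparrow$, where $A_x\subseteq\omega_1$ records a subfamily of $\UU$ covering the slice $\{x\}\times X^{[1,\omega)}$ of $X^\omega$. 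The point of basic open sets is that each $U_\alpha$ depends on only finitely many coordinates, which is exactly what is needed to make $x\mapsto A_x$, and hence $\Phi$, upper semi-continuous; I also expect a Mi\v{s}\v{c}enko-type point-countability argument on the traces of $\UU$ will be needed to produce a usable $A_x$.

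Granting such a $\Phi$, the rest follows the familiar pattern. The graph $G_\Phi=\{(x,t):t\in\Phi(x)\}$ is a closed subspace of $X\times K$; since $K$ is Lindel\"of, $X\times K$ is quasi-Lindel\"of, and hence so is $G_\Phi$. On the other hand, from the assumption that $\UU$ has no countable subcover one builds an open cover of $G_\Phi$ --- one member $W_\alpha\times(\alpha\!\uparrow)$ (with $W_\alpha$ the appropriate trace of $U_\alpha$) for each $\alpha<\omega_1$, plus one member coming from a neighbourhood of $\infty$ in $K$ --- with no countable subcover: given countable $S\subseteq\omega_1$ pick $x\in X$ whose slice is not covered by $\{U_\alpha:\alpha\in S\}$, and then a suitable $t$ with $(x,t)\in G_\Phi$ lies outside the $\infty$-member and outside every $W_\alpha\times(\alpha\!\uparrow)$ for $\alpha\in S$. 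This is the desired contradiction.

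The hard part will be the construction of $\Phi$, that is, choosing the sets $A_x$ coherently. One will probably have to peel coordinates off $X^\omega$ one at a time --- each $U_\alpha$ factors as $V^\alpha_0\times U^-_\alpha$ with $U^-_\alpha$ basic open of strictly smaller support whenever $n_\alpha\geq 1$ --- and run a recursion along $\omega$ (or first split $\UU$ according to the value of $n_\alpha$ and handle $X^N$ for each $N$), dragging the compact fibres along at each stage. The real obstacle, and the reason the separation axioms had to be dropped here by a route different from that of the previous sections, is that no iteration of ``$X\times(\text{Lindel\"of})$ is quasi-Lindel\"of'' is available: all of the infinitely many extra coordinates and the $\omega_1$-sized indexing must be squeezed into the single compact factor $K$ and the single map $\Phi$, and keeping $\Phi$ upper semi-continuous throughout is precisely where the basic-open hypothesis is indispensable.
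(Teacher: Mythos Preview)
Your plan has a genuine gap at precisely the point you flag as ``the hard part'': the construction of $\Phi$. You want $A_x$ to record a subfamily of $\UU$ that covers the slice $\{x\}\times X^{[1,\omega)}$, and you need $x\mapsto A_x\!\uparrow$ to be upper semi-continuous. But which $U_\alpha$ are needed to cover that slice is global information about $X^{[1,\omega)}$, not local information about $x$; there is no evident way to make such a choice vary upper-semicontinuously without already knowing something like ``$X^{[1,\omega)}$ can be covered nicely'', which is exactly what you are trying to prove. The recursion you sketch (peel off one coordinate at a time) runs into the same wall: after one step you are left with a problem about $X^{[1,\omega)}$ of the same shape as the original, and you have no induction hypothesis available since, as you note, even $X\times X$ need not be quasi-Lindel\"of. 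The Mi\v{s}\v{c}enko idea does not help either, since nothing here is point-countable.

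The paper's proof uses the set-valued mapping method too, but avoids your obstacle by \emph{not} trying to compress $X^\omega$ into a single product $X\times K$. Instead it defines $\Phi:X\to 2^{\kappa\times\omega}$ by $\Phi(x)=\{(\alpha,i):x\in U^\alpha_i\}\!\uparrow$, which is trivially compact-valued and upper semi-continuous because it records only which basic factors contain $x$. The image $Y=\bigcup_{x\in X}\Phi(x)$ is then productively Lindel\"of (standard consequence of compact-valued usc maps) and has $w(Y)\leq\cccc$, so Alster's Theorem~\ref{atheorem} gives that $Y^\omega$ is Lindel\"of. One then transfers the original cover to an open cover $\{V_\alpha:\alpha<\kappa\}$ of $Y^\omega$ and pulls a countable subcover back to $X^\omega$. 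The key idea you are missing is that the infinitely many coordinates are handled by passing to the $\omega$-th power of the \emph{target} space $Y$ (where Alster applies), rather than by encoding them into a single Lindel\"of factor on the $X$ side.
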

\begin{proof}
Let $\{U_\alpha:\alpha\in\kappa\}$ be a cover of $X^\omega$ consisting of basic open sets, where $\kappa=\cccc$. 
Write $U_\alpha=\prod_{i\in\omega}U_i^\alpha$ for each $\alpha$, where each $U_i^\alpha$ is an open subset of $X$ and $U_i^\alpha=X$ for all but finitely many $i$.

Consider the set-valued mapping from $X$ to $2^{\kappa\times\omega}$ obtained by
defining
$$
\Phi(x)=\{(\alpha,i)\in\kappa\times\omega:x\in U_i^\alpha\}\!\uparrow
$$
for every $x\in X$. Notice that $\Phi$ is compact-valued and
upper-semicontinuous.
It follows that $Y=\bigcup_{x\in X}\Phi(x)\subseteq 2^{\kappa\times\omega}$ is
productively Lindel\"of.
Since $\kappa=\cccc$, it is clear that $w(Y)\leq\cccc$. Therefore $Y$ is
powerfully Lindel\"of
by Theorem \ref{atheorem}.

For each $\alpha\in\kappa$, define
$$
V_\alpha=\{(y_i:i\in\omega)\in Y^\omega:(\alpha,i)\in y_i\textrm{ for every
}i\in\omega\}.
$$
We claim that $\{V_\alpha:\alpha\in\kappa\}$ is an open cover of $Y^\omega$.
First we will prove that $V_\alpha=\{(y_i:i\in\omega)\in Y^\omega:(\alpha,i)\in
y_i\textrm{ for every }i\in\omega\textrm{ such that }U^\alpha_i\neq X\}$. Notice that this implies
that each $V_\alpha$ is open. The inclusion $\subseteq$ is obvious. In order to prove the other inclusion, fix $y=(y_i:i\in\omega)\in Y^\omega$
such that $(\alpha,i)\in y_i$ for every $i\in\omega$ such that $U^\alpha_i\neq
X$. By the definition of $Y$, there exists $(x_i:i\in\omega)\in X^\omega$ such
that $y_i\supseteq\{(\beta,j)\in\kappa\times\omega:x_i\in U_j^\beta\}$ for each $i$.
We have to show that
$(\alpha,i)\in y_i$ for each $i$. So fix $i\in\omega$. If $U_i^\alpha\neq X$
then $(\alpha,i)\in y_i$ by assumption. On the other hand, if $U_i^\alpha= X$
then $(\alpha,i)\in\{(\beta,j)\in\kappa\times\omega:x_i\in U_j^\beta\}\subseteq
y_i$. Next, we will show that $\{V_\alpha:\alpha\in\kappa\}$ covers $Y^\omega$.
So fix $y=(y_i:i\in\omega)\in Y^\omega$.
By the definition of $Y$, there exists $x=(x_i:i\in\omega)\in X^\omega$ such
that $y_i\supseteq\{(\beta,j)\in\kappa\times\omega:x_i\in U_j^\beta\}$ for each
$i$.
Let $\alpha\in\kappa$ be such that $x\in U_\alpha$. It is clear that $y\in V_\alpha$.

To conclude the proof, assume that $S\subseteq\kappa$ is such that
$\{V_\alpha:\alpha\in S\}$ covers $Y^\omega$. It will be enough to show that
$\{U_\alpha:\alpha\in S\}$ covers $X^\omega$.
So fix $x=(x_i:i\in\omega)\in X^\omega$. Define
$y_i=\{(\beta,j)\in\kappa\times\omega:x_i\in U_j^\beta\}$ for each $i$, and
notice that each $y_i\in Y$.
Since $y=(y_i:i\in\omega)\in Y^\omega$, there exists $\alpha\in S$ such that
$y\in V_\alpha$. It follows from the definitions of $V_\alpha$ and $y_i$ that
$x\in U_\alpha$.
\end{proof}

Notice that the proof of Lemma \ref{nosepccover} also yields the following result. Corollary
\ref{nosepmichael} shows that separation axioms are irrelevant to Question \ref{qmichael}. The fact that separation axioms are irrelevant to the other, more
famous, question of Michael (whether $\omega^\omega$ is productively Lindel\"of)
was proved by Duanmu, Tall and Zdomskyy using the same methods (see \cite[Lemma 1]{duanmutallzdomskyy}).
\begin{theorem}\label{nosepalster}
Let $\kappa$ be an infinite cardinal. If there exists a productively quasi-Lindel\"of space $X$ with $w(X)\leq\kappa$ that is not powerfully quasi-Lindel\"of,
then there exists a zero-dimensional productively Lindel\"of space $Y$ with
$w(Y)\leq\kappa$ that is not powerfully Lindel\"of.
\end{theorem}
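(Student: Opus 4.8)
The plan is to re-run the proof of Lemma~\ref{nosepccover} almost verbatim, replacing its appeal to $\CH$ and Theorem~\ref{atheorem} by the hypothesis that $X$ is not powerfully quasi-Lindel\"of, and dropping the conclusion that the resulting space is powerfully Lindel\"of.

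First I would fix an open cover $\WW$ of $X^\omega$ that has no countable subcover, witnessing that $X$ is not powerfully quasi-Lindel\"of. Since $w(X)\leq\kappa$, the space $X^\omega$ has a base consisting of basic open sets of size at most $\kappa$; refining $\WW$ through this base yields a cover $\{U_\alpha:\alpha\in\lambda\}$ of $X^\omega$ by basic open sets with $\lambda\leq\kappa$. This refinement still has no countable subcover: if $\{U_\alpha:\alpha\in S\}$ covered $X^\omega$ with $S$ countable, then choosing for each $\alpha\in S$ a member of $\WW$ containing $U_\alpha$ would give a countable subcover of $\WW$. Write $U_\alpha=\prod_{i\in\omega}U_i^\alpha$ with each $U_i^\alpha$ open in $X$ and $U_i^\alpha=X$ for all but finitely many $i$.

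Next I would form the set-valued mapping $\Phi:X\longrightarrow 2^{\lambda\times\omega}$ defined by $\Phi(x)=\{(\alpha,i)\in\lambda\times\omega:x\in U_i^\alpha\}\!\uparrow$, which is compact-valued and upper semi-continuous exactly as in Lemma~\ref{nosepccover}. Then $Y=\bigcup_{x\in X}\Phi(x)$ is productively quasi-Lindel\"of by the method of \cite{zdomsky}; since $Y$ is a non-empty subspace of the Cantor cube $2^{\lambda\times\omega}$, it is zero-dimensional, hence regular, hence in fact productively Lindel\"of. Moreover $w(Y)\leq w(2^{\lambda\times\omega})\leq\kappa$. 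Finally I would define $V_\alpha=\{(y_i:i\in\omega)\in Y^\omega:(\alpha,i)\in y_i\text{ for every }i\in\omega\}$ for $\alpha\in\lambda$ and copy the argument of Lemma~\ref{nosepccover} showing that $\{V_\alpha:\alpha\in\lambda\}$ is an open cover of $Y^\omega$ and that whenever $\{V_\alpha:\alpha\in S\}$ covers $Y^\omega$ then $\{U_\alpha:\alpha\in S\}$ covers $X^\omega$. Since no countable subfamily of $\{U_\alpha:\alpha\in\lambda\}$ covers $X^\omega$, no countable subfamily of $\{V_\alpha:\alpha\in\lambda\}$ covers $Y^\omega$; thus $Y^\omega$ is not quasi-Lindel\"of, and being regular it is not Lindel\"of, so $Y$ is not powerfully Lindel\"of.

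The only points requiring care — and they are routine — are that refining the cover through a base of size at most $\kappa$ preserves the absence of a countable subcover, and that the output $Y$ of the set-valued mapping, which a priori is only productively quasi-Lindel\"of, is genuinely productively Lindel\"of because it inherits zero-dimensionality (hence regularity) from the Cantor cube. There is no substantial obstacle beyond faithfully transcribing the relevant portion of the proof of Lemma~\ref{nosepccover}.
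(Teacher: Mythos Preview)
Your proposal is correct and is exactly what the paper intends: the authors give no separate proof but simply remark that ``the proof of Lemma~\ref{nosepccover} also yields'' Theorem~\ref{nosepalster}, and you have faithfully unpacked that remark by rerunning the set-valued mapping construction on a basic-open refinement (of size at most $\kappa$) of a witnessing cover and observing that the resulting $Y\subseteq 2^{\lambda\times\omega}$ is zero-dimensional, productively Lindel\"of, of weight at most $\kappa$, and not powerfully Lindel\"of. The only cosmetic difference is that you route through ``productively quasi-Lindel\"of $+$ regular $\Rightarrow$ productively Lindel\"of'' explicitly, whereas the paper's proof of Lemma~\ref{nosepccover} asserts productive Lindel\"ofness of $Y$ directly; either way the argument is the same.
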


\begin{corollary}\label{nosepmichael}
The following are equivalent.
\begin{itemize}
\item Every productively quasi-Lindel\"of space is powerfully quasi-Lindel\"of.
\item Every productively Lindel\"of space is powerfully Lindel\"of.
\item Every zero-dimensional productively Lindel\"of space is powerfully
Lindel\"of.
\end{itemize}
\end{corollary}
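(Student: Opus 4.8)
The plan is to establish the cycle of implications $(1)\Rightarrow(2)\Rightarrow(3)\Rightarrow(1)$, where the only substantial link is the last one, which we obtain for free from Theorem \ref{nosepalster}.

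For $(1)\Rightarrow(2)$, I would start from a productively Lindel\"of space $X$ and make two elementary observations. First, $X$ is Lindel\"of (apply productive Lindel\"ofness with a one-point space), hence regular, so $X^\omega$ is regular, being a product of regular spaces. Second, $X$ is productively quasi-Lindel\"of, since $X\times Y$ is Lindel\"of, and therefore quasi-Lindel\"of, for every Lindel\"of $Y$. Invoking $(1)$ then gives that $X^\omega$ is quasi-Lindel\"of, and a regular quasi-Lindel\"of space is Lindel\"of by the conventions fixed in Section 1. Hence $X$ is powerfully Lindel\"of, as required.

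The implication $(2)\Rightarrow(3)$ is immediate, since every zero-dimensional productively Lindel\"of space is in particular a productively Lindel\"of space.

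For $(3)\Rightarrow(1)$ I would argue by contraposition, and this is where Theorem \ref{nosepalster} does the work. Suppose $(1)$ fails and fix a productively quasi-Lindel\"of space $X$ that is not powerfully quasi-Lindel\"of. Choosing any infinite cardinal $\kappa$ with $w(X)\leq\kappa$ (for instance $\kappa=w(X)+\omega$), Theorem \ref{nosepalster} yields a zero-dimensional productively Lindel\"of space $Y$ that is not powerfully Lindel\"of, so that $(3)$ fails. The only point that needs a moment's care — and the closest thing to an obstacle here — is that Theorem \ref{nosepalster} carries a weight hypothesis on $X$; but since every space has a weight and $\kappa$ may be chosen arbitrarily large, this causes no difficulty. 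Note also that no instance of $\CH$ or of cardinal arithmetic enters, so the resulting equivalence is a theorem of $\mathsf{ZFC}$.
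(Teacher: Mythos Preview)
Your proof is correct and follows exactly the route the paper intends: the paper leaves the corollary without an explicit proof, treating it as an immediate consequence of Theorem~\ref{nosepalster} together with the trivial implications among the three statements, and you have simply written those details out carefully. The only subtle point---that $(3)\Rightarrow(1)$ follows from Theorem~\ref{nosepalster} by choosing $\kappa$ large enough to accommodate $w(X)$---is handled exactly as one would expect.
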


\section{Point-$\cccc$ families}

In this section, we give an affirmative answer to Question \ref{qmichael} for one more class of spaces (see Theorem \ref{mainpc}). The main ingredients of the proof are Lemma \ref{nosepccover} and Lemma \ref{lemmapc}.

\begin{lemma}\label{lemmapc}
Let $X$ be a set, and let $\kappa$ be an infinite cardinal. Assume that $\WW$ is a point-$2^\kappa$ family of subsets of $X$. Let $M$ be a $\kappa$-closed elementary submodel such that $\{X,\WW\}\subseteq M$.
If $W\in\WW$ and $W\cap M\neq\varnothing$ then $W\in M$.
\end{lemma}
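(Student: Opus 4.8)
The plan is to fix a point $x\in W\cap M$ — possible since $W\cap M\neq\varnothing$ — and to reduce the statement to showing that the whole family $\WW_x=\{W'\in\WW:x\in W'\}$ is contained in $M$; this suffices because $W\in\WW_x$. First I would check that $\WW_x\in M$: the set $\WW_x$ is defined from the two parameters $\WW$ and $x$, both of which belong to $M$, so ``there is a set consisting of exactly those $W'\in\WW$ with $x\in W'$'' is a statement true in $H(\theta)$ with parameters in $M$, and elementarity delivers $\WW_x\in M$. Note also that $x\in X$ (since $\varnothing\neq W\subseteq X$), so the hypothesis that $\WW$ is point-$2^\kappa$ gives $|\WW_x|\leq 2^\kappa$.

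The second ingredient consists of two routine facts about $\kappa$-closed elementary submodels, which I would establish before anything else. The first is $\kappa\subseteq M$: if not, the least $\alpha\in\kappa\setminus M$ satisfies $\alpha\subseteq M$ and $|\alpha|\leq\kappa$, so $\alpha\in[M]^{\leq\kappa}\subseteq M$, a contradiction. The second, which follows, is that $\kappa\in M$ (since $\kappa$ itself is then a subset of $M$ of size $\kappa$) and $\PP(\kappa)\subseteq M$ (every subset of $\kappa$ is a subset of $M$ of size at most $\kappa$); in particular $M$ computes $\PP(\kappa)$ correctly.

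With these in hand, the core of the argument is to ``re-route'' an enumeration of $\WW_x$ through $\PP(\kappa)$. Since $\WW_x\neq\varnothing$ and $|\WW_x|\leq 2^\kappa=|\PP(\kappa)|$, there is a surjection from $\PP(\kappa)$ onto $\WW_x$ in the universe, hence in $H(\theta)$; as $\kappa\in M$ and $\WW_x\in M$, elementarity yields such a surjection $g\in M$, and pulling the statement back up to $H(\theta)$ (which correctly sees $\PP(\kappa)$, $\WW_x$ and $g$) shows that $g$ is genuinely a surjection of $\PP(\kappa)$ onto $\WW_x$. Finally, for every $A\in\PP(\kappa)$ we have $A\in M$ and $g\in M$, whence $g(A)\in M$; since $g$ is onto $\WW_x$, this proves $\WW_x\subseteq M$, and in particular $W\in M$.

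I do not anticipate a genuinely hard step here: the lemma is a bookkeeping exercise with elementary submodels. The one point that needs care is keeping track of the direction of elementarity — ensuring that the surjection witnessed inside $M$ really maps all of $\PP(\kappa)$ onto all of $\WW_x$ (which is exactly why it matters that $\PP(\kappa)\subseteq M$, so that $M$'s version of $\PP(\kappa)$ is the true one) — and remembering that $M$ needs $\kappa$ as an element in order to refer to $\PP(\kappa)$ at all, which is why the observation $\kappa\in M$ is not superfluous.
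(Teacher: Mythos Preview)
Your proof is correct and follows essentially the same approach as the paper: fix a point of $W\cap M$, note that the local family $\WW_x$ lies in $M$, pull a surjection $\PP(\kappa)\to\WW_x$ into $M$ by elementarity, and use $\PP(\kappa)\subseteq M$ to conclude $\WW_x\subseteq M$. You are simply more explicit than the paper about the auxiliary facts $\kappa\in M$ and $\PP(\kappa)\subseteq M$, which the paper folds into the single remark ``$\PP(\kappa)\subseteq M$ because $M$ is $\kappa$-closed''.
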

\begin{proof}
Define $\WW_x=\{W\in\WW:x\in W\}$ for $x\in X$, and notice that $|\WW_x|\leq 2^\kappa$ for every $x\in X$. Now fix $W\in\WW$ such that $W\cap M\neq\varnothing$. 
Let $z\in W\cap M$, and observe that $\WW_z\in M$. By elementarity,
$$
M\vDash\textrm{There exists a surjection $f:\PP(\kappa)\longrightarrow\WW_z$}.
$$
Furthermore, $\PP(\kappa)\subseteq M$ because $M$ is $\kappa$-closed. Therefore $\WW_z\subseteq M$, and in particular $W\in M$.
\end{proof}

\begin{theorem}\label{mainpc}
Assume $\CH$. Let $(X,\tau)$ be a productively quasi-Lindel\"of space such that every open cover of $X^\omega$ has a point-$\cccc$ refinement consisting of basic open sets.
Then $X$ is powerfully quasi-Lindel\"of.
\end{theorem}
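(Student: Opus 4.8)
The plan is to adapt the proof of Theorem \ref{mainct}, using Lemma \ref{nosepccover} in place of Corollary \ref{network} and Lemma \ref{lemmapc} in place of the arguments that exploited $\omega$-closedness there; Mi\v{s}\v{c}enko's lemma will supply the combinatorial input that compensates for the absence of a closure operation. First I would fix an open cover $\UU$ of $X^\omega$ and choose an $\omega$-closed elementary submodel $M$ with $\{(X,\tau),\UU\}\subseteq M$ and $|M|=\cccc$. Since the hypothesis of the theorem holds and $\UU\in M$, elementarity provides a point-$\cccc$ refinement $\WW$ of $\UU$, consisting of basic open sets, with $\WW\in M$; under $\CH$ we have $\cccc=2^\omega$, so $\WW$ is point-$2^\omega$. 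By Lemma \ref{lemmapc} (with $\kappa=\omega$, applied to the family $\WW$ of subsets of $X^\omega$) every member of $\WW$ that meets $M$ belongs to $M$; in particular $\WW\cap M$, being a subfamily of $M$, has cardinality at most $|M|=\cccc$, and it consists of basic open sets.

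The crux is to show that $\WW\cap M$ still covers $X^\omega$. That it covers $X^\omega\cap M=(X\cap M)^\omega$ is immediate, since the statement ``$\bigcup\WW=X^\omega$'' reflects into $M$ and $X^\omega\cap M=(X\cap M)^\omega$ by $\omega$-closedness. To reach the points of $X^\omega$ lying outside $M$, I would invoke Mi\v{s}\v{c}enko's lemma: because $\WW$ is point-$\cccc$, for every finite $F\subseteq X^\omega$ there are at most $\cccc$ minimal subfamilies of $\WW$ that cover $F$. Running this with $F$ ranging over the (at most $\cccc$-many) finite subsets of $(X\cap M)^\omega$, each of which lies in $M$, elementarity together with $\PP(\omega)\subseteq M$ forces every such minimal subfamily to be a subset of $M$; the finite-support structure of the basic open sets should then allow one to conclude that an arbitrary $x\in X^\omega$ lies in some $W$ occurring in one of these minimal subfamilies, hence in $\bigcup(\WW\cap M)$. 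I expect this last point to be the main obstacle: with no countable-tightness hypothesis available, a point of $X^\omega$ cannot be approximated by points of $(X\cap M)^\omega$, so the covering has to be extracted from the product structure of the basic open sets together with the Mi\v{s}\v{c}enko count rather than from a topological limit.

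Once $\WW\cap M$ is known to be a cover of $X^\omega$ of size at most $\cccc$ consisting of basic open sets, Lemma \ref{nosepccover} (applicable since $X$ is productively quasi-Lindel\"of and $\CH$ holds) yields a countable $\VV\subseteq\WW\cap M$ with $\bigcup\VV=X^\omega$. Since $\VV$ refines $\UU$, picking for each $W\in\VV$ a member of $\UU$ containing it produces a countable subfamily of $\UU$ covering $X^\omega$; thus $X^\omega$ is quasi-Lindel\"of, i.e., $X$ is powerfully quasi-Lindel\"of. If the argument of the previous paragraph only delivers that $\WW\cap M$ covers $(X\cap M)^\omega$, one can instead finish as at the end of the proof of Theorem \ref{mainct}: extract a countable $\VV$ covering $(X\cap M)^\omega=X^\omega\cap M$, note that $\VV\in M$ since $\VV$ is countable and contained in $M$, so that $M$ sees $\VV$ as a cover of $X^\omega$, and then transfer this to the real world by elementarity before lifting to $\UU$.
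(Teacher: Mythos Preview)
Your alternative ending is the right one and is essentially the paper's argument, but as written it has a real gap: you propose to ``extract a countable $\VV$ covering $(X\cap M)^\omega$'', yet you have no tool that does this. Lemma \ref{nosepccover} requires a cover of $Y^\omega$ for some productively quasi-Lindel\"of space $Y$; the subspace $X\cap M$ need not be productively quasi-Lindel\"of (it is not closed in $X$ in general), and $\WW\cap M$ is not known to cover the full $X^\omega$, so neither version of the lemma applies. The Mi\v{s}\v{c}enko detour you sketch first is not needed and, as you suspect, does not close this gap either.

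The missing idea is to pass to $Z=\cl(X\cap M)$. Then $Z$ is closed in $X$, hence productively quasi-Lindel\"of, so Lemma \ref{nosepccover} applies to $Z$. The point you were worried about---that without countable tightness one cannot approximate arbitrary points by points of $M$---is a non-issue here: to see that $\WW\cap M$ covers $Z^\omega$, take $z\in Z^\omega$ and $W\in\WW$ with $z\in W$; since $W$ is open and $Z^\omega=\cl\big((X\cap M)^\omega\big)=\cl(X^\omega\cap M)$, the set $W$ meets $X^\omega\cap M$, so $W\cap M\neq\varnothing$ and Lemma \ref{lemmapc} gives $W\in M$. No tightness hypothesis is used---only that $W$ is open. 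Now Lemma \ref{nosepccover} (applied to $Z$) yields a countable $\VV\subseteq\WW\cap M$ covering $Z^\omega\supseteq(X\cap M)^\omega=X^\omega\cap M$, and your reflection argument finishes the proof exactly as you wrote. Your initial reduction (putting $\UU$ in $M$ and choosing $\WW\in M$ by elementarity, then lifting back to $\UU$ at the end) is fine; the paper instead observes at the outset that it suffices to find a countable subcover of any point-$\cccc$ basic-open cover and starts directly with such a $\WW$, but this is only a cosmetic difference.
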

\begin{proof}
It will be enough to show that every point-$\cccc$ cover of $X^\omega$ consisting of basic open sets has a countable subcover. So fix such a cover $\WW$. Let $M$ be an $\omega$-closed elementary submodel such that $\{(X,\tau),\WW\}\subseteq M$ and $|M|=\cccc$. Let $Z=\cl(X\cap M)$.

We claim that that $Z^\omega\subseteq\bigcup(\WW\cap M)$.
Fix $z\in Z^\omega$. Let $W\in\WW$ be such that $z\in W$. Using the fact that $M$ is $\omega$-closed, it is easy to check that $Z^\omega=\cl(X^\omega\cap M)$. Therefore $W\cap M\neq\varnothing$. Hence $W\in M$ by Lemma \ref{lemmapc}, which proves our claim.

Since $|\WW\cap M|\leq\cccc$, it follows from Lemma \ref{nosepccover} that there exists $\VV\in[\WW\cap M]^{\leq\omega}$ such that
$Z^\omega\subseteq\bigcup\VV$. Now proceed as in the proof of Theorem \ref{mainct}.
\end{proof}

The following corollary shows that Theorem \ref{mainpc} might be viewed as a strenghtening of Theorem \ref{bttheorem}.

\begin{corollary}\label{nosepbt}
Assume $\CH$. Let $X$ be a productively quasi-Lindel\"of space such that $\ell(X^\omega)\leq\cccc$.
Then $X$ is powerfully quasi-Lindel\"of.
\end{corollary}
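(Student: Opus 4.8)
The plan is to deduce this directly from Theorem \ref{mainpc}. Since $\CH$ holds and $X$ is productively quasi-Lindel\"of, it suffices to verify that every open cover of $X^\omega$ admits a point-$\cccc$ refinement consisting of basic open sets.

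So I would fix an open cover $\UU$ of $X^\omega$ and first pass to the refinement $\BB$ consisting of all basic open subsets $B$ of $X^\omega$ such that $B\subseteq U$ for some $U\in\UU$; since the basic open sets form a base for $X^\omega$, this $\BB$ is still an open cover of $X^\omega$. Next, using the hypothesis $\ell(X^\omega)\leq\cccc$, I would extract a subcover $\BB'\in[\BB]^{\leq\cccc}$. The key (and essentially only) observation is that any family of cardinality at most $\cccc$ is automatically point-$\cccc$, so $\BB'$ is a point-$\cccc$ refinement of $\UU$ consisting of basic open sets. Theorem \ref{mainpc} then yields that $X$ is powerfully quasi-Lindel\"of.

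There is really no obstacle here: the corollary is an immediate specialization of Theorem \ref{mainpc}, and this is precisely the sense in which Theorem \ref{mainpc} strengthens Theorem \ref{bttheorem}. The hypothesis $\ell(X^\omega)\leq\cccc$ is used only to produce a subcover of size at most $\cccc$ of an arbitrary open cover, which is exactly a point-$\cccc$ refinement by the triviality just mentioned; and Theorem \ref{bttheorem} itself is recovered because, $X$ being productively Lindel\"of, it is both productively quasi-Lindel\"of and regular, so $X^\omega$ is a regular quasi-Lindel\"of space, hence Lindel\"of.
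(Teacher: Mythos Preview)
Your proof is correct and follows exactly the intended route: the paper states this as an immediate corollary of Theorem~\ref{mainpc}, and the only thing to check is that $\ell(X^\omega)\leq\cccc$ lets one extract from any basic open refinement a subcover of size $\leq\cccc$, which is trivially point-$\cccc$. Your closing remark on recovering Theorem~\ref{bttheorem} is also accurate, since in the paper's conventions Lindel\"of means regular plus quasi-Lindel\"of.
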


As a further corollary of Theorem \ref{mainpc} one obtains that, under $\CH$, every productively Lindel\"of space with a point-$\cccc$ base is powerfully Lindel\"of,
which is a strengthening of Theorem \ref{atheorem}. However, as Corollary \ref{lindmiscenko} shows, the improvement is illusory. Although we could not find it in the literature, we feel that Theorem \ref{genmiscenko} might already be known. In fact, it is inspired by
the classical result of Mi\v{s}\v{c}enko stating that every compact space with a point-countable base has a countable base (see \cite{miscenko} or \cite[Exercise 3.12.23(f)]{engelking}), which can be proved using a similar argument (let $M$ be countable instead of $\kappa$-closed).

\begin{theorem}\label{genmiscenko}
Let $\kappa$ be an infinite cardinal. Assume that $(X,\tau)$ is a $\mathsf{T}_1$ space such that $\ell(X)\leq\kappa$ and $X$ has a point-$2^\kappa$ base. Then $w(X)\leq 2^\kappa$.
\end{theorem}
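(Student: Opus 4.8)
The plan is to fix a point-$2^\kappa$ base $\BB$ for $X$, to build (as in Section 1) a $\kappa$-closed elementary submodel $M$ with $\{(X,\tau),\BB\}\subseteq M$ and $|M|=2^\kappa$, and then to show that $\BB\cap M$ is a base for $X$. Since $|\BB\cap M|\leq|M|=2^\kappa$, this will give $w(X)\leq 2^\kappa$. Throughout, set $Z=\cl(X\cap M)$. The overall shape of the argument will parallel the proof of Theorem \ref{lctimpliesnetwork}, with Lemma \ref{lemmapc} playing the role that regularity and countable tightness played there.

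First I would invoke Lemma \ref{lemmapc} with $\WW=\BB$: since $\BB$ is point-$2^\kappa$, every $B\in\BB$ that meets $M$ belongs to $M$. From this I would deduce that $\BB\cap M$ is a base for $X$ at every point of $Z$. Indeed, fix $z\in Z$ and an open set $U\ni z$, and suppose that no $B\in\BB\cap M$ satisfies $z\in B\subseteq U$. Every $B\in\BB$ with $z\in B\subseteq U$ is nonempty, and if such a $B$ met $M$ then it would lie in $M$, hence in $\BB\cap M$, contrary to our assumption; so every such $B$ is disjoint from $M$. Consequently the open set $G=\bigcup\{B\in\BB:z\in B\subseteq U\}$ is disjoint from $M$. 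But $\BB$ is a base, so $z\in G$, and $z\in\cl(X\cap M)$ forces $G\cap(X\cap M)\neq\varnothing$, a contradiction.

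The decisive step — and the one I expect to be the main obstacle — is to prove that $Z=X$; granting this, the previous paragraph immediately shows that $\BB\cap M$ is a base for $X$, and we are done. Suppose toward a contradiction that there is $x\in X\setminus Z$. For each $z\in Z$ we have $z\neq x$, so $X\setminus\{x\}$ is an open neighbourhood of $z$ (this is the only place $\mathsf{T}_1$ enters), and by the previous step there is $B_z\in\BB\cap M$ with $z\in B_z\subseteq X\setminus\{x\}$. The family $\{B_z:z\in Z\}$ is then an open cover of $Z$; since $Z$ is closed in $X$ we have $\ell(Z)\leq\ell(X)\leq\kappa$, so there is $C\in[Z]^{\leq\kappa}$ with $Z\subseteq\bigcup\{B_z:z\in C\}=:G$. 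As $\{B_z:z\in C\}$ is a subset of $M$ of size at most $\kappa$ and $M$ is $\kappa$-closed, this family and therefore its union $G$ belong to $M$. Now $x\notin G$, so $G\subsetneq X$; since $G,X\in M$, elementarity produces $y\in X\cap M$ with $y\notin G$. But $X\cap M\subseteq\cl(X\cap M)=Z\subseteq G$, a contradiction. Hence $Z=X$. The heart of this step is manufacturing, inside $M$, the open set $G$ that separates $x$ from $X\cap M$; it is exactly the $\kappa$-closedness of $M$ that makes $G$ available in $M$, and this is what must be replaced by compactness (finite subcovers) when one specializes to a merely countable $M$ in the classical Mi\v{s}\v{c}enko argument.
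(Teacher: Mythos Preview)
Your proof is correct and follows essentially the same approach as the paper: both fix a $\kappa$-closed elementary submodel $M$ of size $2^\kappa$, invoke Lemma \ref{lemmapc} to get base elements into $M$, and use $\mathsf{T}_1$, $\ell(X)\leq\kappa$, and $\kappa$-closedness to derive a contradiction from the existence of a point outside $\cl(X\cap M)$. The only cosmetic difference is that the paper concludes directly that the given base $\BB$ has size at most $2^\kappa$ (since density gives $\BB=\bigcup_{x\in X\cap M}\BB_x$), whereas you show that the subfamily $\BB\cap M$ is itself a base; your extra intermediate step (that $\BB\cap M$ is a local base at each point of $Z$) is not needed in the paper's organization but is harmless.
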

\begin{proof}
Fix a point-$2^\kappa$ base $\BB$ for $X$. Let $M$ be a $\kappa$-closed elementary submodel such that $\{(X,\tau),\BB\}\subseteq M$ and $|M|=2^\kappa$.
Define $\BB_x=\{B\in\BB:x\in B\}$ for $x\in X$, and notice that $|\BB_x|\leq 2^\kappa$ for every $x\in X$.
We claim that $X\cap M$ is dense in $X$. Since this implies $\BB=\bigcup_{x\in X\cap M}\BB_x$, hence $|\BB|\leq 2^\kappa$, this will conclude the proof.

Assume, in order to get a contradiction, that $z\in X\setminus\cl(X\cap M)$. Define
$$
\UU=\{B\in\BB:B\cap M\neq\varnothing\textrm{ and }z\notin B\},
$$
and notice that $\UU\subseteq M$ by Lemma \ref{lemmapc}. Using the fact that $\{z\}$ is closed, one sees that $\UU$ is a cover of $\cl(X\cap M)$.
Therefore, there exists $\VV\in[\UU]^{\leq\kappa}$ such that $\VV$ is a cover of $\cl(X\cap M)$. Observe that $\VV\in M$ because $\VV\subseteq\UU\subseteq M$ and $M$ is $\kappa$-closed, hence
$$
M\vDash\textrm{$\VV$ is a cover of $X$}.
$$
By elementarity, it follows that $\VV$ is a cover of $X$, contradicting our choice of $z$.
\end{proof}

\begin{corollary}\label{lindmiscenko}
Let $X$ be a Lindel\"of space with a point-$\cccc$ base. Then $w(X)\leq\cccc$.
\end{corollary}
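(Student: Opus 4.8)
The plan is to derive this immediately from Theorem \ref{genmiscenko} by specializing to $\kappa=\omega$. First I would recall that, in the terminology of this paper, every Lindel\"of space is regular and in particular $\mathsf{T}_1$, and that $X$ Lindel\"of means precisely $\ell(X)\leq\omega$. Next, the cardinal arithmetic identity $2^\omega=\cccc$ shows that a point-$\cccc$ base is the same thing as a point-$2^\omega$ base. Hence all hypotheses of Theorem \ref{genmiscenko} are satisfied with $\kappa=\omega$, and its conclusion yields $w(X)\leq 2^\omega=\cccc$, which is exactly the assertion of the corollary.

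There is no real obstacle here: the statement is a direct instance of Theorem \ref{genmiscenko}, and the only thing to check is the trivial translation between ``point-$2^\omega$'' and ``point-$\cccc$''. I would also point out explicitly, for the benefit of the reader, that Corollary \ref{lindmiscenko} needs no additional set-theoretic assumption (in contrast with most results of the paper): even under $\CH$ it is the plain equality $2^\omega=\cccc$ that is being used. This is precisely what makes the ``improvement'' mentioned in the preceding paragraph illusory, since for a productively Lindel\"of space a point-$\cccc$ base already forces $w(X)\leq\cccc$, so that Theorem \ref{atheorem} applies directly.
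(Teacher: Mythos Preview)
Your proposal is correct and matches the paper's intended argument: the corollary is stated without proof precisely because it is the immediate specialization of Theorem~\ref{genmiscenko} to $\kappa=\omega$, using that Lindel\"of spaces are regular (hence $\mathsf{T}_1$) in this paper's conventions and that $\cccc=2^\omega$. Your additional remark explaining why the ``improvement'' is illusory also accurately reflects the paper's own commentary preceding Theorem~\ref{genmiscenko}.
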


\end{document}